\theoremstyle{thmrm}
\theoremstyle{plain}
\newtheorem{no}{Notation}[section]
\newtheorem{Prob}{Problem}
\newtheorem{thm}{Theorem}[section]
\newtheorem{lemma}[thm]{Lemma}
\newtheorem{prop}[thm]{Proposition}
\newtheorem*{Algo}{Algorithm}
\newtheorem{cor}[thm]{Corollary}
\newtheorem{question}[thm]{Question}
\newtheorem{defn}[thm]{Definition}
\newtheorem{example}[thm]{Example}
\newtheorem*{ack}{Acknowledgement}
\theoremstyle{definition}
\newtheorem{remark}[equation]{Remark}
\newcommand{\Z}{\operatorname{Z} }
\newcommand{\im}{\operatorname{Im} }
\newcommand{\Fix}{\operatorname{Fix}}
\newcommand{\Hol}{\operatorname{Hol}}
\newcommand{\Id}{\operatorname{Id}}
\newcommand{\I}{\operatorname{I} }
\newcommand{\Aut}{\operatorname{Aut} }
\newcommand{\Hom}{\operatorname{Hom} }
\newcommand{\Ker}{\operatorname{Ker} }
\newcommand{\Ann}{\operatorname{Ann} }
\newcommand{\IM}{\operatorname{Im} }
\numberwithin{equation}{section}
\begin{document}

\title{Relative Rota-Baxter groups and skew left braces}

\author{Nishant Rathee}
\address{Department of Mathematical Sciences, Indian Institute of Science Education and Research (IISER) Mohali, Sector 81, SAS Nagar, P O Manauli, Punjab 140306, India} 
\email{nishantrathee@iisermohali.ac.in}

\author{Mahender Singh}
\address{Department of Mathematical Sciences, Indian Institute of Science Education and Research (IISER) Mohali, Sector 81, SAS Nagar, P O Manauli, Punjab 140306, India} 
\email{mahender@iisermohali.ac.in}

\subjclass[2010]{17B38, 16T25}
\keywords{Isoclinism; relative Rota-Baxter group, Rota-Baxter group; skew left braces; Yang-Baxter equation}

\begin{abstract}
Relative Rota-Baxter groups are generalisations of Rota-Baxter groups and introduced recently in the context of Lie groups. In this paper, we explore connections of relative Rota-Baxter groups with skew left braces, which are well-known to give non-degenerate set-theoretic solutions of the Yang-Baxter equation. We prove that every  relative Rota-Baxter group gives rise to a skew left brace, and conversely, every skew left brace arises from a relative Rota-Baxter group. It turns out that there is an isomorphism between the two categories under some mild restrictions. We propose an efficient GAP algorithm, which would enable the computation of relative Rota-Baxter operators on finite groups. In the end, we introduce the notion of isoclinism of relative Rota-Baxter groups and prove that an isoclinism of these objects induces an isoclinism of corresponding skew left braces.
\end{abstract}	

\maketitle

\section{Introduction}
The quantum Yang-Baxter equation is a fundamental equation arising in mathematical physics that forms the basis of the theory of quantum groups. In \cite[Section 9]{D92}, Drinfeld proposed to investigate set-theoretical solutions of the quantum Yang-Baxter equation. A set-theoretical solution is defined as a set $X$ together with a map $R: X \times X \rightarrow X \times X$ that satisfies the equation 
$$R^{12} R^{13} R^{23}= R^{23} R^{13} R^{12},$$
 where each $R^{ij}:X \times X \times X \rightarrow X \times X \times X$ is a map that acts as $R$ on the $i$-th and the $j$-th component and as identity on the remaining component. Setting $R(x, y)= (f_x(y), g_y(x))$, if the maps $R$, $f_x$, and $g_x$ are bijections for all $x \in X$, then the solution is said to be non-degenerate. If $R^2=\Id _{X\times X}$, then the solution is referred to as involutive. The complete classification of set-theoretical solutions of the quantum Yang-Baxter equation is a wide open problem and has attracted considerable interest in the last decade.
\par

Rump \cite{WR07} introduced left braces as generalisations of Jacobson radical rings and showed that they give rise to involutive set-theoretical solutions of the Yang-Baxter equation. Guarnieri and Vendramin  \cite{GV17} later generalized the concept to skew left braces, which gives non-degenerate set-theoretic solutions of the Yang-Baxter equation that are not necessarily involutive.  Algebraic properties of skew left braces have been exploited to construct new solutions via matched products, semi-direct products, and asymmetric products \cite{DB18, BCJO18, BCJO19, CCS, CCS1, CCS2, WR08}. An extension theory for (skew) left braces has also been developed having expected connections with second cohomology of these objects \cite{DB18, CCS3, LV16, NMY1}. 
\par

Recently, in \cite{LHY2021}, Guo, Lang and Sheng introduced Rota-Baxter operators on Lie groups in connection with the well-studied Rota-Baxter operators on Lie algebras. Further study of Rota-Baxter operators on (abstract) groups has been carried out by Bardakov and Gubarev in \cite{VV2022, VV2023}, where they showed that every  Rota-Baxter operator on a group gives rise to a skew left brace structure on that group. Rota-Baxter operators on Clifford semigroups have been considered in \cite{CMP}. The idea of Rota-Baxter groups has been extended further by Jiang, Sheng and Zhu  \cite{JYC} to relative Rota-Baxter groups, which we pursue in this text.
\par

In this paper, we explore connections between relative Rota-Baxter groups and skew left braces. We prove that every  relative Rota-Baxter group gives rise to a skew left brace (Proposition \ref{rrb2sb}), and that every skew left brace arises from a relative Rota-Baxter group (Proposition \ref{skew left brace to rrbg}). It turns out that there is an isomorphism between the category of bijective relative Rota-Baxter groups and the category of skew left braces (Theorem \ref{iso brrbg and slb}). Using the result of	\cite[Proposition 3.4]{JYC}, we propose an efficient GAP algorithm, which would enable the computation of relative Rota-Baxter operators on finite groups. We introduce appropriate sub-objects and quotient objects in the category of relative Rota-Baxter groups and use them to define the notion of isoclinism for these objects, which generalises the notion of isoclinism of groups. We conclude by proving that an isoclinism of these objects induces an isoclinism of corresponding skew left braces (Theorem \ref{isoclinism rrbg implies isoclinism slb}).
\medskip

\section{Preliminaries on relative Rota-Baxter groups}
	
In this section, we recall some basic notions about relative Rota-Baxter groups that we shall need, and refer the readers to \cite{JYC,VV2022} for more details.

\begin{defn}
A relative Rota-Baxter group is a quadruple $(H, G, \phi, R)$, where $H$ and $G$ are groups, $\phi: G \rightarrow \Aut(H)$ a group homomorphism (where $\phi(g)$ is denoted by $\phi_g$) and $R: H \rightarrow G$ is a map satisfying the condition $$R(h_1) R(h_2)=R(h_1 \phi_{R(h_1)}(h_2))$$ for all $h_1, h_2 \in H$. 
\par
\noindent The map $R$ is referred as the relative Rota-Baxter operator on $H$.
\end{defn}

We say that the relative Rota-Baxter group  $(H, G, \phi, R)$ is trivial if $\phi:G \to \Aut(H)$ is the trivial homomorphism.

\begin{example}
Let $G$ be a group with subgroups $H$ and $L$ such that $G=HL$ and $H\cap L=\{1\}$. Then $(G,G, \phi, R)$ is a relative Rota-Baxter group, where $R: G \rightarrow G$ denotes the map given by $R(hl) = l^{-1}$ and $\phi : G \rightarrow \Aut(G)$ is the adjoint action, that is, $\phi_g(x) = gxg^{-1}$ for $g, h \in G$.
\end{example}

\begin{example}
Let $\mathbb{Z}_{2n}=\langle a \rangle$ and $\mathbb{Z}_{2m}=\langle b \rangle$ be cyclic groups of order $2n$ and $2m$, respectively. Let $R: \mathbb{Z}_{2m} \rightarrow \mathbb{Z}_{2n}$ be the map defined by 
$$
	R(b^k)=
\begin{cases}
a^n \quad \mbox{ if $k$ is an odd natural number, }  \\
1 \quad \mbox{ if $k$ is an even natural number.} 
\end{cases}
$$
Then $(\mathbb{Z}_{2m} , \mathbb{Z}_{2n}, \phi, R)$ is a relative Rota-Baxter group with respect to all homomorphisms $\phi: \mathbb{Z}_{2n} \rightarrow	\Aut(\mathbb{Z}_{2m})$. 
\end{example}

\begin{example}
Take $H= \mathbb{R}$ and $G=UP(2; \mathbb{R})$, the group of invertible upper triangular matrices. Let $\phi:UP(2; \mathbb{R})\to \Aut(\mathbb{R})$ be given by

$$\phi_{\begin{pmatrix} a & b \\ 0 & c \end{pmatrix}}(r)= ar$$
for $\begin{pmatrix} a & b \\ 0 & c \end{pmatrix} \in UP(2; \mathbb{R})$ and $r \in \mathbb{R}$. Further, let $R: \mathbb{R} \to  UP(2; \mathbb{R})$ be given by $$R(r)=\begin{pmatrix} 1 & r \\ 0 & 1 \end{pmatrix}.$$ Then $(\mathbb{R},UP(2; \mathbb{R}), \phi, R)$ is a relative Rota-Baxter group.
\end{example}

We now define a morphism of two relative Rota-Baxter groups with respect to the same action \cite[Definition 3.6]{JYC}.

\begin{defn}\label{morphism of rel rbo}
Let $(H, G, \phi, R)$ and $(H, G, \phi, S)$ be two relative Rota-Baxter groups. A morphism from $(H, G, \phi, R)$ to $(H, G, \phi, S)$ is a pair $(\psi, \eta)$, where $\psi: H \rightarrow H$ and   $\eta: G \rightarrow G $ are homomorphisms satisfying the conditions
$$\eta \; R= S \; \psi \quad \textrm{and} \quad	 \psi \; \phi_{g}= \phi_{\eta(g)} \; \psi$$
for all $g \in G.$

\end{defn}

\begin{defn}
A Rota-Baxter group is a group $G$ together with a map $R: G \rightarrow G$ such that
$$ R(x)  R(y)= R(x  R(x)  y  R(x)^{-1}) $$
for all $x, y \in G$. The map $R$ is referred as the Rota-Baxter operator on $G$.
\end{defn}

Let $\phi : G \rightarrow \Aut(G)$ be the adjoint action, that is, $\phi_g(x)=gxg^{-1}$ for $g, h \in G$. Then the relative Rota-Baxter group $(G, G, \phi, R)$ is simply a Rota-Baxter group.

\begin{prop}\label{R homo H to G} \cite[Proposition 3.5]{JYC} 
Let $(H, G, \phi, R)$ be a relative Rota-Baxter group. Then the operation 
\begin{align}
h_1 \circ_R h_2 := h_1 \phi_{R(h_1)}(h_2)
\end{align}
defines a group operation on $H$.  Moreover, the map $R: H^{(\circ_R)} \rightarrow G$ is a group homomorphism. The group $H^{(\circ_R)}$ is called the descendent group of $B$.
\end{prop}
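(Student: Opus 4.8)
The plan is to verify the group axioms for $\circ_R$ one at a time, extracting along the way the two auxiliary facts $R(1_H)=1_G$ and $R(h_1 \circ_R h_2)=R(h_1)R(h_2)$. The second of these is nothing but the defining relation of $R$ rewritten, and it immediately settles the final assertion that $R\colon H^{(\circ_R)}\to G$ is a homomorphism; so the real work lies entirely in establishing that $\circ_R$ is a group operation.

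First I would pin down the identity. Setting $h_1=h_2=1_H$ in the defining relation gives $R(1_H)\,R(1_H)=R(1_H)$, and cancelling in the group $G$ forces $R(1_H)=1_G$. Since $\phi$ is a homomorphism, $\phi_{1_G}=\Id_H$, so that $1_H \circ_R h = \phi_{R(1_H)}(h)=\phi_{1_G}(h)=h$; and since each $\phi_g$ fixes $1_H$, we get $h \circ_R 1_H = h\,\phi_{R(h)}(1_H)=h$. Hence $1_H$ is a two-sided identity for $\circ_R$.

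The heart of the matter is associativity, and this is the step where I expect all the hypotheses to be used at once (though the computation itself is routine rather than subtle). The key observation is that the defining relation says precisely $R(h_1 \circ_R h_2)=R(h_1)R(h_2)$. Using this, together with the fact that $\phi\colon G\to\Aut(H)$ is a homomorphism (so $\phi_{R(h_1)R(h_2)}=\phi_{R(h_1)}\,\phi_{R(h_2)}$) and that each $\phi_g$ is an automorphism of $H$, I would compute
$$ (h_1 \circ_R h_2)\circ_R h_3 = (h_1\circ_R h_2)\,\phi_{R(h_1)R(h_2)}(h_3) = h_1\,\phi_{R(h_1)}\bigl(h_2\,\phi_{R(h_2)}(h_3)\bigr) = h_1 \circ_R (h_2\circ_R h_3), $$
so that the two bracketings coincide.

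Finally, for inverses I would propose the candidate $\bar h := \phi_{R(h)^{-1}}(h^{-1})$. A direct check gives $h\circ_R \bar h = h\,\phi_{R(h)}\bigl(\phi_{R(h)^{-1}}(h^{-1})\bigr)=h\,h^{-1}=1_H$, so $\bar h$ is a right inverse. Applying the defining relation to this equation yields $R(h)\,R(\bar h)=R(1_H)=1_G$, whence $R(\bar h)=R(h)^{-1}$; substituting this back gives $\bar h \circ_R h = \phi_{R(h)^{-1}}(h^{-1})\,\phi_{R(h)^{-1}}(h)=\phi_{R(h)^{-1}}(h^{-1}h)=1_H$, so $\bar h$ is a two-sided inverse. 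This completes the verification that $(H,\circ_R)$ is a group, and as noted the homomorphism property of $R$ needs no further argument, being exactly the defining relation.
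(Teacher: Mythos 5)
Your proof is correct and complete: the identity element, associativity, the two-sided inverse $\bar h=\phi_{R(h)^{-1}}(h^{-1})$, and the homomorphism property are all verified soundly, and the key observation that the defining relation reads $R(h_1\circ_R h_2)=R(h_1)R(h_2)$ is exactly right. Note, however, that the paper itself offers no proof to compare against: this proposition is quoted from \cite[Proposition 3.5]{JYC}, so your direct verification fills in what the paper leaves as a citation. It is worth contrasting your route with the one implicit in the paper's own organisation (Theorem \ref{SubgrptoRB} and Remark \ref{iso H circ and Gr}): there one first shows that $Gr(R)=\{(R(h),h)\mid h\in H\}$ is a subgroup of $G\ltimes_{\phi}H$ (closure under the semidirect product, $(R(h_1),h_1)(R(h_2),h_2)=(R(h_1)R(h_2),\,h_1\phi_{R(h_1)}(h_2))$, is precisely the Rota-Baxter identity), and then transports the group structure along the bijection $h\mapsto (R(h),h)$; associativity and inverses then come for free from the ambient group, and $R$ is a homomorphism because it is the restriction to $Gr(R)$ of the canonical projection $G\ltimes_{\phi}H\to G$. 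Your approach is more elementary and self-contained; the graph approach buys a conceptual explanation of where $\circ_R$ comes from and simultaneously yields the isomorphism $H^{(\circ_R)}\cong Gr(R)$ that the paper uses later.
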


\begin{remark}
If  $(H, G, \phi, R)$ is a relative Rota-Baxter group, then the image $R(H)$ of $H$ under $R$ is  a subgroup of $G$.
\end{remark}

\begin{thm}\label{SubgrptoRB}	\cite[Proposition 3.4]{JYC} Let $H$ and $G$ be groups and $\phi: G \rightarrow \Aut(H)$ be an action of $G$ on $H$. Then a map $R : H \rightarrow G$ is a relative Rota-Baxter operator if and only if the set 
\begin{align}\label{gr}
Gr(R)=\{(R(h), h) \mid h \in H \}
\end{align}
 is a subgroup of the semi-direct product $G \ltimes_{\phi} H$.
\end{thm}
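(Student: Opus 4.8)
The plan is to identify the multiplication on $G \ltimes_\phi H$ explicitly and observe that the relative Rota-Baxter axiom is precisely the condition that makes $Gr(R)$ closed under this multiplication. With the convention $(g_1, h_1)(g_2, h_2) = (g_1 g_2,\, h_1 \phi_{g_1}(h_2))$ (which one checks is associative, has identity $(1,1)$, and inverse $(g,h)^{-1} = (g^{-1}, \phi_{g^{-1}}(h^{-1}))$), a direct computation gives
$$(R(h_1), h_1)(R(h_2), h_2) = \bigl(R(h_1) R(h_2),\; h_1 \phi_{R(h_1)}(h_2)\bigr),$$
so the first coordinate of the product is $R(h_1)R(h_2)$ while the second coordinate is exactly the element $h_1 \phi_{R(h_1)}(h_2)$ appearing inside $R$ on the right-hand side of the defining identity. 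This single observation is the crux of both implications.

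For the backward direction I would assume $Gr(R)$ is a subgroup. Then the product displayed above lies in $Gr(R)$, so there exists $h_3 \in H$ with $(R(h_1)R(h_2),\, h_1\phi_{R(h_1)}(h_2)) = (R(h_3), h_3)$. Comparing second coordinates forces $h_3 = h_1\phi_{R(h_1)}(h_2)$, and comparing first coordinates then yields $R(h_1)R(h_2) = R(h_1\phi_{R(h_1)}(h_2))$, which is the relative Rota-Baxter identity. Note that this direction uses only closure under multiplication, not the full subgroup property.

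For the forward direction I would assume $R$ satisfies the relative Rota-Baxter identity and verify the subgroup criterion. First, taking $h_1 = h_2 = 1$ in the identity gives $R(1)^2 = R(1)$, hence $R(1) = 1$ and the identity element $(1,1) = (R(1), 1)$ lies in $Gr(R)$. Closure is immediate from the displayed computation together with the defining identity. The only step requiring care is closure under inverses: the inverse of $(R(h), h)$ in $G \ltimes_\phi H$ is $(R(h)^{-1},\, \phi_{R(h)^{-1}}(h^{-1}))$, and I must check that its second coordinate $k := \phi_{R(h)^{-1}}(h^{-1})$ satisfies $R(k) = R(h)^{-1}$. Applying the relative Rota-Baxter identity to the pair $(h, k)$, and using that $\phi$ is a homomorphism so $\phi_{R(h)}\phi_{R(h)^{-1}} = \phi_{1} = \Id$, gives
$$R(h)R(k) = R\bigl(h\, \phi_{R(h)}(k)\bigr) = R\bigl(h\, \phi_{R(h)}\phi_{R(h)^{-1}}(h^{-1})\bigr) = R(h h^{-1}) = R(1) = 1,$$
so $R(k) = R(h)^{-1}$ and therefore $(R(h),h)^{-1} = (R(k), k) \in Gr(R)$.

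The main obstacle is simply pinning down the correct semidirect-product convention so that closure of $Gr(R)$ matches the axiom on the nose; once that is fixed, the only genuine computation is the inverse check above, which is needed precisely because $H$ may be infinite (for finite $H$, closure together with nonemptiness would already give a subgroup). Alternatively, one could package the forward direction by observing that $h \mapsto (R(h), h)$ is a homomorphism from the descendent group $H^{(\circ_R)}$ of Proposition \ref{R homo H to G} into $G \ltimes_\phi H$ whose image is $Gr(R)$, and a homomorphic image is a subgroup; I nonetheless prefer the self-contained argument above, since it avoids presupposing the group structure on $H^{(\circ_R)}$.
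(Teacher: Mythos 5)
Your proof is correct. Note that the paper does not supply its own proof of this statement---it is imported from \cite[Proposition 3.4]{JYC}---so there is no internal argument to compare against; what you wrote is the standard proof, and it matches the conventions the paper uses elsewhere: your multiplication rule $(g_1,h_1)(g_2,h_2)=(g_1g_2,\,h_1\phi_{g_1}(h_2))$ and the closure computation in your backward direction are exactly what appear in the proof of Proposition \ref{cal}, where the paper recovers a relative Rota-Baxter operator $R_M$ from a subgroup $M\in \textbf{S}(H,G,\phi)$. The two ingredients you supply beyond that, namely $R(1)=1$ and the inverse check $R\bigl(\phi_{R(h)^{-1}}(h^{-1})\bigr)=R(h)^{-1}$ (which, as you say, is genuinely needed when $H$ is infinite), are both handled correctly.
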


\begin{remark}\label{iso H circ and Gr}
By Proposition \ref{R homo H to G},  $R: H^{(\circ_R)} \rightarrow G$ is a group homomorphism. Thus, it follows that $H^{(\circ_R)} \cong Gr(R)$ as groups via the the map $h \mapsto (R(h), h)$.
\end{remark}
\medskip

\section{Relative Rota-Baxter groups and skew left braces}
In this section, we explore relationship between relative Rota-Baxter groups and skew left braces.

\begin{defn}
A skew left brace is a  triple $(H,\cdot ,\circ)$, where $(H,\cdot)$ and $(H, \circ)$ are groups  such that
	$$a \circ (b \cdot c)=(a\circ b) \cdot a^{-1} \cdot (a \circ c)$$
	holds for all $a,b,c \in H$, where $a^{-1}$ denotes the inverse of $a$ in $(H, \cdot)$. The groups $(H,\cdot)$ and $(H, \circ)$ are called the additive and the multiplicative groups of the skew left brace $(H, \cdot, \circ)$, and will sometimes be denoted by $H ^{(\cdot)}$ and  $H ^{(\circ)}$, respectively.
\end{defn}

It is known that skew left braces give rise to non-degenerate solutions of the Yang-Baxter equation \cite[Proposition 1.9]{ GV17}.

\begin{prop}\label{lmap}	
Let $(H,\cdot ,\circ)$ be a skew left brace. The map $\lambda: H^{(\circ)} \rightarrow \operatorname{Aut}(H^{(\cdot)})$ defined by $\lambda_a(b) = a^{-1} \cdot (a \circ b)$ for $a,b \in H$, is a group homomorphism.  Furthermore, 
$$r_H: H \times H \rightarrow H \times H$$
given by
$$r_H(a,b)=\big(\lambda_a(b), \lambda^{-1}_{\lambda_a(b)}((a \circ b)^{-1} a (a \circ b)) \big)$$
is a non-degenerate solution of the Yang-Baxter equation.
\end{prop}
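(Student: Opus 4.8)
The plan is to treat the two assertions in turn: first that $\lambda$ is a well-defined homomorphism into $\operatorname{Aut}(H^{(\cdot)})$, and then that the associated map $r_H$ solves the Yang-Baxter equation. Throughout, the only inputs are the skew left brace axiom $a \circ (b \cdot c) = (a \circ b) \cdot a^{-1} \cdot (a \circ c)$ (inverses in $H^{(\cdot)}$) and the two group structures.

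For the first part, I would begin by showing that each $\lambda_a$ is an endomorphism of $H^{(\cdot)}$. Writing $\lambda_a(b \cdot c) = a^{-1} \cdot (a \circ (b \cdot c))$ and expanding $a \circ (b \cdot c)$ by the brace axiom, the leading $a^{-1}$ telescopes with the internal $a^{-1}$ to give $\lambda_a(b \cdot c) = \lambda_a(b) \cdot \lambda_a(c)$. Next I would verify the homomorphism property $\lambda_{a \circ b} = \lambda_a \lambda_b$: substituting the definition of $\lambda_b(c)$ into $\lambda_a(\lambda_b(c))$, applying the brace axiom to $a \circ (b^{-1} \cdot (b \circ c))$, and using the already-established identity $a^{-1} \cdot (a \circ b^{-1}) = (a \circ b)^{-1} \cdot a$ (i.e. $\lambda_a(b^{-1}) = \lambda_a(b)^{-1}$) together with associativity of $\circ$, the expression collapses to $(a \circ b)^{-1} \cdot ((a \circ b) \circ c) = \lambda_{a \circ b}(c)$. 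Since $\lambda$ is then a homomorphism from the group $H^{(\circ)}$ into the monoid of endomorphisms of $H^{(\cdot)}$ with $\lambda_1 = \Id$, each $\lambda_a$ admits the two-sided inverse $\lambda_{a^{-1}}$ (inverse in $H^{(\circ)}$) and is therefore an automorphism, upgrading $\lambda$ to a group homomorphism $H^{(\circ)} \to \operatorname{Aut}(H^{(\cdot)})$.

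For the second part, I would first record the bridging identity. Writing $\rho_b(a)$ for the second component of $r_H(a,b)$, and using the elementary relation $c \circ \lambda_c^{-1}(x) = c \cdot x$ (a restatement of $c \circ z = c \cdot \lambda_c(z)$) together with $\lambda_a(b) \cdot (a \circ b)^{-1} = a^{-1}$, a short computation yields the matched-pair relation $\lambda_a(b) \circ \rho_b(a) = a \circ b$. From this, non-degeneracy is immediate: each $\lambda_a$ is bijective by the first part, and the relation lets one recover $a$ from $\lambda_a(b)$, so the maps $b \mapsto \lambda_a(b)$ and $a \mapsto \rho_b(a)$ are bijections.

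Finally, to establish the Yang-Baxter equation, I would invoke the standard criterion (Lu-Yan-Zhu / Soloviev) that a non-degenerate map $r(x,y) = (\sigma_x(y), \tau_y(x))$ is a solution precisely when $\sigma_x \sigma_y = \sigma_{\sigma_x(y)} \sigma_{\tau_y(x)}$, the dual relation for $\tau$, and the mixed ``guarded'' compatibility $\sigma_{\tau_{\sigma_y(z)}(x)} \tau_z(y) = \tau_{\sigma_{\tau_y(x)}(z)} \sigma_x(y)$ all hold. The relation for $\sigma = \lambda$ is exactly $\lambda_{a \circ b} = \lambda_a \lambda_b$ combined with $a \circ b = \lambda_a(b) \circ \rho_b(a)$, while the $\tau$-relation and the mixed condition follow by symmetric manipulations built on the same matched-pair identity. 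I expect the main obstacle to be precisely this mixed compatibility, where the interaction of the two operations $\cdot$ and $\circ$ through $\lambda$ and $\rho$ must be carefully disentangled; alternatively, since the statement coincides with \cite[Proposition 1.9]{GV17}, one may cite it directly once the identities above are in place.
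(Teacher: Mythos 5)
The paper itself gives no proof of this proposition: it is quoted as a known result, with the citation to \cite[Proposition 1.9]{GV17} placed immediately before the statement. So there is no in-paper argument to compare against, and your fallback option (``cite it directly'') is literally what the paper does. Judged on its own, the first half of your proposal is correct and complete: multiplicativity of each $\lambda_a$ is the brace axiom rearranged, $\lambda_{a\circ b}=\lambda_a\lambda_b$ follows by exactly the computation you describe, and bijectivity of each $\lambda_a$ comes from $\lambda_{a^{\dagger}}$, where $a^{\dagger}$ is the inverse of $a$ in $H^{(\circ)}$ (one should also record that the identities of the two groups coincide, so that $\lambda_1=\Id$ and $\lambda_a(1)=1$; this follows from the brace axiom with $b=c=1$).

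The genuine gap is in your non-degeneracy argument. You claim that ``the relation lets one recover $a$ from $\lambda_a(b)$''; this is false. In the trivial skew brace ($a\circ b=a\cdot b$) one has $\lambda_a(b)=a^{-1}\cdot a\cdot b=b$ for every $a$, so $a$ cannot be recovered from $\lambda_a(b)$. What non-degeneracy actually requires is that, for fixed $b$, the map $a\mapsto\rho_b(a)$ be a bijection, and this is the only non-formal point in the whole proposition; it does not follow ``immediately'' from the matched-pair identity $\lambda_a(b)\circ\rho_b(a)=a\circ b$. That identity only rewrites $\rho_b(a)=\bigl(\lambda_a(b)\bigr)^{\dagger}\circ a\circ b$, so since right $\circ$-translation by $b$ is bijective, you must show that $U:a\mapsto\bigl(\lambda_a(b)\bigr)^{\dagger}\circ a$ is bijective, and this needs a real argument. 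For instance: if $c=U(a)$, then $a=\lambda_a(b)\circ c$, so by the homomorphism property $\lambda_a=\lambda_{w}\lambda_c$ with $w:=\lambda_a(b)$, whence $\lambda_w^{-1}(w)=\lambda_c(b)$; now $\lambda_w^{-1}(w)=\lambda_{w^{\dagger}}(w)=(w^{\dagger})^{-1}\cdot(w^{\dagger}\circ w)=(w^{\dagger})^{-1}$, and since $w\mapsto(w^{\dagger})^{-1}$ is a composition of the two group inversions, $w$ (hence $a=w\circ c$) is uniquely determined by $c$ and $b$; reversing the computation gives surjectivity. (In the trivial brace this recovers $\rho_b(a)=b^{-1}ab$, visibly bijective.) Likewise, the third Lu--Yan--Zhu condition --- which you yourself flag as the main obstacle --- is deferred to ``symmetric manipulations''. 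As written, then, your proposal proves the first assertion but not the second; the second is only secured by falling back on the citation, which reduces your proof to the paper's own treatment.
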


The map $\lambda$ defined above is referred as the associated $\lambda$-map of the skew left brace $(H,\cdot,\circ)$. Let us define some special types of skew left braces.
\begin{defn}
Let $(H, \cdot, \circ)$ be a skew left brace. Then
	\begin{enumerate}
		\item $(H, \cdot, \circ)$  is said to be a trivial skew left brace if $a \cdot b= a \circ b$ for all $a, b \in H$.
		\item $(H, \cdot, \circ)$ is called a bi-skew left brace if $(H,  \circ, \cdot)$ is also a skew left brace.
		
	\end{enumerate}
\end{defn}
\begin{defn}
	Let $(H, \cdot_H, \circ_H)$ and $(K, \cdot_K, \circ_K)$ be skew left braces. A map $\psi : H \rightarrow K$ is called a homomorphism of skew left braces if, for all $h_1, h_2 \in H$, it satisfies 
	$$\psi(h_1 \cdot_H h_2)=\psi(h_1) \cdot_K \psi(h_2) \quad \textrm{and} \quad \psi(h_1 \circ_H h_2)=\psi(h_1) \circ_K \psi(h_2).$$

\end{defn}
\begin{remark}
	Let $\lambda^H$ and $\lambda^K$ be $\lambda$-maps associated  to skew left braces $(H, \cdot_H, \circ_H)$ and $(K, \cdot_K, \circ_K)$, respectively. A group homomorphism $\psi:H^{(\cdot_H)} \rightarrow K^{(\cdot_K)}$ is homomorphism of corresponding skew left braces if and only, if for all $h \in H$, it satisfies the relation
	$$\psi \; \lambda^H_{h}= \lambda^K_{\psi(h)} \; \psi.$$
\end{remark}

\begin{prop}\label{rrb2sb}
Let $(H, G, \phi, R)$ be a relative Rota-Baxter group. If $\cdot$ denotes the  group operation of $H$, then the triple $(H, \cdot, \circ_R)$ is a skew left brace. 
\end{prop}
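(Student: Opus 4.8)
The plan is to verify directly the two ingredients required of a skew left brace: that both $(H, \cdot)$ and $(H, \circ_R)$ are groups, and that the compatibility axiom
$$a \circ_R (b \cdot c) = (a \circ_R b) \cdot a^{-1} \cdot (a \circ_R c)$$
holds for all $a, b, c \in H$, where $a^{-1}$ denotes the inverse in $(H, \cdot)$. The first ingredient is essentially free: $(H, \cdot)$ is a group by hypothesis, and Proposition \ref{R homo H to G} already establishes that $\circ_R$ endows $H$ with a group structure $H^{(\circ_R)}$. So the only genuine content is checking the displayed identity.

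The key observation driving the computation is that for each $a \in H$ the map $\phi_{R(a)}$ is an automorphism of the group $(H, \cdot)$, since $\phi$ takes values in $\Aut(H)$. Writing the operation explicitly as $a \circ_R x = a \cdot \phi_{R(a)}(x)$, I would expand the left-hand side as $a \circ_R (b \cdot c) = a \cdot \phi_{R(a)}(b \cdot c)$ and use that $\phi_{R(a)}$ respects the product $\cdot$ to rewrite this as $a \cdot \phi_{R(a)}(b) \cdot \phi_{R(a)}(c)$.

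For the right-hand side, I would substitute $a \circ_R b = a \cdot \phi_{R(a)}(b)$ and $a \circ_R c = a \cdot \phi_{R(a)}(c)$ to obtain $a \cdot \phi_{R(a)}(b) \cdot a^{-1} \cdot a \cdot \phi_{R(a)}(c)$, after which the factor $a^{-1} \cdot a$ cancels and the two sides coincide. There is no real obstacle here: notably, the homomorphism property of $R$ on $H^{(\circ_R)}$ is not needed for the brace axiom itself — only the fact that $\phi_{R(a)} \in \Aut(H^{(\cdot)})$ is used — so the Rota-Baxter identity enters only through Proposition \ref{R homo H to G}, which supplies the group axioms for $\circ_R$. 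The main point is therefore simply to record the cancellation carefully and to cite Proposition \ref{R homo H to G} for associativity, identity, and inverses of $\circ_R$.
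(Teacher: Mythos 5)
Your proposal is correct and follows essentially the same route as the paper: both arguments reduce to the fact that $\phi_{R(a)}$ is an automorphism of $(H,\cdot)$ together with the cancellation of $a^{-1}\cdot a$, with the group structure of $\circ_R$ supplied by Proposition \ref{R homo H to G}. The only cosmetic difference is that the paper expands the right-hand side and collapses it to the left-hand side, while you expand both sides and meet in the middle; your explicit citation of Proposition \ref{R homo H to G} for the group axioms is a point the paper leaves implicit.
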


\begin{proof}
For $h_1, h_2, h_3 \in H$,  we have
\begin{align*}
(h_1 \circ_R h_2) \cdot h_1^{-1} \cdot (h_1 \circ_R h_3)=  & h_1 \cdot \phi_{R(h_1)}(h_2) \cdot (h_1^{-1} \cdot h_1) \cdot  \phi_{R(h_1)}(h_3) \\
= & h_1 \cdot (\phi_{R(h_1)}(h_2 \cdot h_3)) \\
=& h_1 \circ_R(h_2 \cdot h_3),
\end{align*}
which shows that $(H, \cdot, \circ_R)$ is a skew left brace.
\end{proof}

If $(H, G, \phi, R)$ is a relative Rota-Baxter group, then $(H, \cdot, \circ_R)$ is referred as the skew left brace induced by $R$ and will be denoted by $H_R$ for brevity.

\begin{remark}\label{iso H circ and Gr}
	Let $(H, G, \phi, R)$ be a relative Rota-Baxter group. By Proposition \ref{R homo H to G}, we conclude that there is an isomorphism between $H^{(\circ_R)}$ and $Gr(R)$ given by the map $h \mapsto (R(h), h)$. In other words, we can say that the graph of a relative Rota-Baxter operator is isomorphic to the multiplicative group of the skew left brace induced by it.
\end{remark}

\begin{remark}
It is worth noting that if $(H, G, \phi, R)$ is a relative Rota-Baxter group and $H_R$ its induced skew left brace, then the set theoretical solution of the Yang-Baxter equation defined by $H_R$ in \cite[Theorem 3.1]{GV17} is the same as the set-theoretical solution defined by the relative Rota-Baxter group $(H, G, \phi, R)$ as stated in \cite[Corollary 3.14]{BGST}.
\end{remark}

\begin{prop} Let $(H, G, \phi, R)$ be a relative Rota-Baxter group. Then $H_R$ is a trivial skew left brace if and only if $\im (R)\subseteq \Ker (\phi)$.
\end{prop}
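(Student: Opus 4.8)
The plan is to unwind the definition of triviality directly in terms of the operation $\circ_R$ and then reduce the whole statement to the defining property of $\Ker(\phi)$. Recall that in the induced skew left brace $H_R = (H, \cdot, \circ_R)$ the two operations are related by $h_1 \circ_R h_2 = h_1 \cdot \phi_{R(h_1)}(h_2)$, so triviality---meaning $h_1 \cdot h_2 = h_1 \circ_R h_2$ for all $h_1, h_2 \in H$---is equivalent, after cancelling $h_1$ on the left in $(H, \cdot)$, to the condition $\phi_{R(h_1)}(h_2) = h_2$ for all $h_1, h_2 \in H$.

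For the forward implication, I would fix an arbitrary $h_1 \in H$ and observe that the displayed equality, holding for every $h_2 \in H$, says precisely that $\phi_{R(h_1)}$ is the identity automorphism of $H$; hence $R(h_1) \in \Ker(\phi)$, and since $h_1$ was arbitrary this gives $\im(R) \subseteq \Ker(\phi)$. For the converse, assuming $\im(R) \subseteq \Ker(\phi)$, each $R(h_1)$ lies in $\Ker(\phi)$, so $\phi_{R(h_1)} = \Id_H$ and therefore $h_1 \circ_R h_2 = h_1 \cdot \phi_{R(h_1)}(h_2) = h_1 \cdot h_2$, which is exactly the triviality of $H_R$.

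There is essentially no genuine obstacle here beyond handling the quantifiers with care: the only point requiring a moment's attention is that the equality $\phi_{R(h_1)}(h_2) = h_2$ must be asserted for \emph{all} $h_2$ before one may conclude that $\phi_{R(h_1)}$ equals the identity map of $H$ (rather than merely fixing some particular element), but this is immediate since $h_2$ ranges over all of $H$. The argument is symmetric in the two directions and relies only on left-cancellation in the additive group $(H, \cdot)$ together with the definition of the kernel of $\phi$.
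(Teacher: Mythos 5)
Your proof is correct and takes essentially the same route as the paper's: unwinding the definition of $\circ_R$, cancelling $h_1$ in $(H,\cdot)$ to reduce triviality to $\phi_{R(h_1)}(h_2)=h_2$ for all $h_1,h_2$, and then reading this as $\im(R)\subseteq\Ker(\phi)$. Your write-up is in fact slightly more careful than the paper's, since you make both directions and the quantifier over $h_2$ explicit, whereas the paper compresses the argument into a single chain of equivalences.
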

\begin{proof}
The skew left brace $H_R$ is trivial if and only if, for all $ h_1, h_2 \in H_R$, we have
$$h_1 h_2 =h_1 \circ_R h_2=h_1 \phi_{R(h_1)}(h_2) \iff \phi_{R(h_1)}(h_2)=h_2.$$
This implies that $R(h) \in \Ker(\phi)$ for all $h \in H$.
\end{proof}

\begin{remark}
It follows from the preceding proposition that  $H_R$ cannot be trivial if $\phi$ is injective and $R$ is non-trivial. Furthermore, if $R$ is a bijection, then $H_R$ is a trivial skew left  brace. Since any relative Rota-Baxter operator induces a skew left brace, the following question seems natural.
\end{remark}

\begin{question}\label{p1}
Is every skew left brace induced by a relative Rota-Baxter operator ?
\end{question}

Let $(H,\cdot,\circ)$ be a skew left brace. It follows from Proposition \ref{lmap}  that the associated  $\lambda$-map $\lambda:H^{(\circ)} \to \Aut(H^{(\cdot)})$ is a group homomorphism. Thus, for any skew left brace $(H,\cdot,\circ)$, the group $H^{(\circ)}$ acts on $H^{(\cdot)}$ via the map $\lambda$. 
 \begin{prop}\label{skew left brace to rrbg}
Let $(H, \cdot, \circ)$ be a skew left brace and $\lambda : H^{(\circ)} \rightarrow \Aut (H^{(\cdot)})$ the associated $\lambda$-map. Then the quadruple $(H^{(\cdot)},  H^{(\circ)}, \lambda, \Id_H)$ is a relative Rota-Baxter group. Furthermore,  the skew left brace structure induced by the relative Rota-Baxter operator $\Id_H$  is the same as $(H, \cdot, \circ)$.
 \end{prop}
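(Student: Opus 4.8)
The plan is to verify the relative Rota-Baxter identity directly for the quadruple $(H^{(\cdot)}, H^{(\circ)}, \lambda, \Id_H)$, and then to compute the induced operation $\circ_{\Id_H}$ and check it coincides with $\circ$. First I would unpack what the Rota-Baxter condition demands here: with $R = \Id_H$ and the action given by $\lambda$, the defining identity $R(h_1)R(h_2) = R\big(h_1 \phi_{R(h_1)}(h_2)\big)$ becomes, writing the group operation of $G = H^{(\circ)}$ as $\circ$,
\begin{align*}
h_1 \circ h_2 = h_1 \cdot \lambda_{h_1}(h_2),
\end{align*}
since $R = \Id_H$ means $\phi_{R(h_1)} = \lambda_{h_1}$ and $R$ does nothing on either side. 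So the entire first claim reduces to the single identity $h_1 \circ h_2 = h_1 \cdot \lambda_{h_1}(h_2)$.

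The key step, then, is to recognize that this identity is nothing but the definition of the $\lambda$-map rearranged. By Proposition \ref{lmap}, $\lambda_{h_1}(h_2) = h_1^{-1} \cdot (h_1 \circ h_2)$, where $h_1^{-1}$ is the inverse in $(H, \cdot)$. Multiplying on the left by $h_1$ in the additive group gives exactly $h_1 \cdot \lambda_{h_1}(h_2) = h_1 \circ h_2$, which is precisely the required Rota-Baxter identity. I would also note that $\lambda$ is a genuine group homomorphism $H^{(\circ)} \to \Aut(H^{(\cdot)})$ by the same proposition, so the data $(H^{(\cdot)}, H^{(\circ)}, \lambda, \Id_H)$ does constitute a valid quadruple of the required type. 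This establishes the first assertion.

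For the second assertion, I would compute the induced multiplicative operation $\circ_{\Id_H}$ from Proposition \ref{R homo H to G}. By definition $h_1 \circ_{\Id_H} h_2 = h_1 \cdot \phi_{\Id_H(h_1)}(h_2) = h_1 \cdot \lambda_{h_1}(h_2)$, and by the identity just established this equals $h_1 \circ h_2$. Since the additive group in the induced skew left brace is $(H, \cdot)$ by Proposition \ref{rrb2sb} and the multiplicative operation is $\circ_{\Id_H} = \circ$, the induced skew left brace is literally $(H, \cdot, \circ)$, as claimed.

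I do not expect a serious obstacle here; the proposition is essentially a bookkeeping verification that the two structures are mutually inverse constructions. The only point requiring a little care is keeping track of which group's inverse and multiplication appear where—in particular that the $h_1^{-1}$ in the $\lambda$-map formula is the additive inverse—but once the $\lambda$-map identity is written in the form $h_1 \cdot \lambda_{h_1}(h_2) = h_1 \circ h_2$, both claims fall out immediately, so the main effort is simply invoking Proposition \ref{lmap} correctly.
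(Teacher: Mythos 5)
Your proof is correct and follows exactly the route the paper intends: the paper's own proof is the one-line remark that everything ``follows from the definition of $\lambda$,'' and your argument is precisely that verification written out, namely that $\lambda_{h_1}(h_2) = h_1^{-1}\cdot(h_1 \circ h_2)$ rearranges to $h_1 \cdot \lambda_{h_1}(h_2) = h_1 \circ h_2$, which is simultaneously the Rota-Baxter identity for $\Id_H$ and the statement that $\circ_{\Id_H} = \circ$. Your care about which inverse appears in the $\lambda$-map formula is exactly the right point to flag, and nothing further is needed.
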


\begin{proof}
The proof follows from the definition of $\lambda$. 
\end{proof}

Let $(H, \cdot, \circ)$ be a skew left brace and $\lambda : H^{(\circ)} \rightarrow \Aut (H^{(\cdot)})$ its associated $\lambda$-map. Then the quadruple $(H^{(\cdot)}, H^{(\circ)}, \lambda, \mathrm{Id}_H)$ is referred as the relative Rota-Baxter group induced by the skew left brace $(H, \cdot, \circ)$. We now reformulate Theorem \ref{SubgrptoRB} in a manner suitable for computational purposes.

Before proceeding further, let us set some notations. Let $H$ and $G$ be groups, and $\phi: G \rightarrow \Aut(H)$ a homomorphism.
	\begin{enumerate}
		\item Let $\textbf{RB}(H, G, \phi)$ denote the set of all  maps $R:H \to G$ such that $(H, G, \phi, R)$ is a relative Rota-Baxter group.
		\item Let $\textbf{S}(H, G, \phi)$ denote the set of all subgroups $K$ of $G \ltimes_{\phi} H$ whose order equal to the order of $H$ and the natural projection from $K$ onto $H$ is surjective.
	\end{enumerate}

\begin{prop}\label{cal}
There is a one-to-one correspondence between  \textbf{RB}$(H, G, \phi )$ and $\textbf{S}(H, G, \phi)$.
\end{prop}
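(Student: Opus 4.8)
The plan is to exhibit the natural candidate map and verify it is well defined, injective, and surjective, with Theorem \ref{SubgrptoRB} serving as the main engine in both directions. Define $\Theta : \textbf{RB}(H, G, \phi) \to \textbf{S}(H, G, \phi)$ by $\Theta(R) = Gr(R)$, where $Gr(R) = \{(R(h), h) \mid h \in H\}$ is the graph of $R$ inside $G \ltimes_{\phi} H$. The first task is to check that $\Theta$ really lands in $\textbf{S}(H, G, \phi)$. By Theorem \ref{SubgrptoRB}, for $R \in \textbf{RB}(H, G, \phi)$ the set $Gr(R)$ is a subgroup of $G \ltimes_{\phi} H$. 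Since the assignment $h \mapsto (R(h), h)$ is a bijection from $H$ onto $Gr(R)$, we obtain $|Gr(R)| = |H|$; and the restriction to $Gr(R)$ of the canonical projection $G \ltimes_{\phi} H \to H$, $(g, h) \mapsto h$, is surjective, since it hits every $h$ via $(R(h), h)$. Hence $Gr(R) \in \textbf{S}(H, G, \phi)$.

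Injectivity of $\Theta$ is immediate: if $Gr(R) = Gr(S)$, then for each $h \in H$ the elements $(R(h), h)$ and $(S(h), h)$ both lie in the common set and are determined by their second coordinate, forcing $R(h) = S(h)$, so $R = S$.

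The heart of the argument is surjectivity. Given $K \in \textbf{S}(H, G, \phi)$, consider the projection $\pi : K \to H$, $(g, h) \mapsto h$. By hypothesis $\pi$ is surjective and $|K| = |H|$, so $\pi$ is a bijection, since a surjection between finite sets of equal cardinality is injective. Consequently, for every $h \in H$ there is a unique $g_h \in G$ with $(g_h, h) \in K$, and we define $R : H \to G$ by $R(h) = g_h$. By construction $Gr(R) = K$: each $(R(h), h)$ lies in $K$, and conversely every element of $K$ has the form $(g, h)$ with $g = R(h)$ by uniqueness. Since $Gr(R) = K$ is a subgroup of $G \ltimes_{\phi} H$, Theorem \ref{SubgrptoRB} guarantees that $R$ is a relative Rota-Baxter operator, that is $R \in \textbf{RB}(H, G, \phi)$, and $\Theta(R) = K$.

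I expect the only genuinely delicate point to be the injectivity of $\pi$ in the surjectivity step; this is exactly what the order condition $|K| = |H|$ in the definition of $\textbf{S}(H, G, \phi)$ is designed to supply, ensuring that each such $K$ is the graph of an honest function rather than a multivalued relation. Everything else reduces to unwinding the definitions and invoking Theorem \ref{SubgrptoRB} in both directions, so the map $\Theta$ furnishes the desired one-to-one correspondence.
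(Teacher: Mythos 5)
Your proof is correct and takes essentially the same route as the paper: both directions rest on the graph correspondence of Theorem \ref{SubgrptoRB}, with the inverse assignment sending $K$ to the map $h \mapsto g_h$, where $g_h$ is the unique element with $(g_h, h) \in K$. The only cosmetic differences are that you invoke the converse direction of Theorem \ref{SubgrptoRB} where the paper instead re-verifies the Rota-Baxter identity by a direct computation in $G \ltimes_{\phi} H$, and that you spell out the finiteness argument (a surjection between finite sets of equal cardinality is injective) which the paper leaves implicit in its uniqueness claim.
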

\begin{proof}
	Define $\psi: \textbf{RB}(H, G, \phi ) \rightarrow \textbf{S}(H, G, \phi)$  by
	$$\psi(R)= Gr(R)$$
for $R \in  \textbf{RB}(H, G, \phi )$. The map $\psi$ is well-defined by Theorem \ref{SubgrptoRB}.  Next, we define the inverse of $\psi$. If $M \in\textbf{S}(H, G, \phi)$, then by definition of $\textbf{S}(H, G, \phi)$, for every $h \in H$ there exists a unique $g_{h} \in G$ such that $(g_{h}, h)  \in M$. Define $R_M : H \rightarrow G$ by $R_M(h):=g_{h}$. For $h_1, h_2 \in H$, we have $$(g_{h_1}, h_1) (g_{h_2}, h_2) = (g_{h_1} g_{h_2}, h_1 \phi_{g_{h_1}}(h_2)).$$ Since $M$ is a group, $(g_{h_1}, h_1) (g_{h_2}, h_2) \in M$, which implies that $g_{h_1 \phi_{g_{h_1}} (h_2)} = g_{h_1} g_{h_2}$. Thus, we conclude that $(H, G, \phi, R_M)$ is a relative Rota-Baxter group. Now, we define a map $\eta: \textbf{S}(H, G, \phi) \to \textbf{RB}(H, G, \phi)$ by setting $$\eta(M) = R_M$$ for $M \in \textbf{S}(H, G, \phi)$. 	It is easy to see that $\psi$ and $\eta$ are inverses of each other, which proves the proposition.

\end{proof}

Suppose that $H$ is a  finite group and $G$ a group with the same order as that of $H$. A fundamental problem in the theory of skew left braces is whether there exists a skew left brace structure on $H$ such that its multiplicative group is isomorphic to $G$. As a consequence of Proposition \ref{cal}, we provide a necessary and sufficient condition for the existence of a skew left brace structure in terms of subgroups of semi-direct products of $G$ by $H$.

\begin{cor}
	Let $H$ be a finite group and $G$ a group with the same order as $H$. Then, there exists a skew left brace structure on $H$ whose multiplicative group is isomorphic to $G$ if and only if there exists $M \in \textbf{S}(H, G, \phi)$ for some action $\phi: G \rightarrow \Aut(H)$, such that $M$ is isomorphic to $G$.
\end{cor}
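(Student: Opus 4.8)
The plan is to deduce the corollary directly from the correspondences already established, namely Proposition \ref{cal}, Proposition \ref{rrb2sb}, Proposition \ref{skew left brace to rrbg}, and the identification $H^{(\circ_R)} \cong Gr(R)$ recorded in Remark \ref{iso H circ and Gr}. I would treat the two implications separately, since they are of quite different character.

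For the backward implication, suppose $M \in \textbf{S}(H, G, \phi)$ satisfies $M \cong G$ for some action $\phi : G \to \Aut(H)$. Applying the inverse bijection $\eta$ from Proposition \ref{cal} produces a relative Rota-Baxter operator $R_M := \eta(M)$ with $Gr(R_M) = M$, so that $(H, G, \phi, R_M)$ is a relative Rota-Baxter group. Proposition \ref{rrb2sb} then equips $H$ with the skew left brace $(H, \cdot, \circ_{R_M})$, whose additive group is $H$ and whose multiplicative group is $H^{(\circ_{R_M})}$. By Remark \ref{iso H circ and Gr} one has $H^{(\circ_{R_M})} \cong Gr(R_M) = M \cong G$, so this skew left brace has multiplicative group isomorphic to $G$, as required. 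This direction is essentially bookkeeping and should present no difficulty.

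For the forward implication, suppose $(H, \cdot, \circ)$ is a skew left brace whose multiplicative group $H^{(\circ)}$ is isomorphic to $G$, and fix an isomorphism $\theta : H^{(\circ)} \to G$. By Proposition \ref{skew left brace to rrbg}, the quadruple $(H^{(\cdot)}, H^{(\circ)}, \lambda, \Id_H)$ is a relative Rota-Baxter group recovering $(H, \cdot, \circ)$, where $\lambda$ is the associated $\lambda$-map. The difficulty is that the target group produced here is $H^{(\circ)}$ itself rather than the prescribed group $G$, whereas $\textbf{S}(H, G, \phi)$ is defined relative to $G$; I would therefore transport the structure across $\theta$. Concretely, set $\phi := \lambda \circ \theta^{-1} : G \to \Aut(H^{(\cdot)})$ and $R := \theta \circ \Id_H : H \to G$, and verify the relative Rota-Baxter axiom directly: since $\phi_{R(h_1)} = \lambda_{h_1}$ and $\lambda_{h_1}(h_2) = h_1^{-1}\cdot(h_1 \circ h_2)$, one computes $h_1 \cdot \phi_{R(h_1)}(h_2) = h_1 \circ h_2$, whence $R\big(h_1 \cdot \phi_{R(h_1)}(h_2)\big) = \theta(h_1 \circ h_2) = \theta(h_1)\,\theta(h_2) = R(h_1)R(h_2)$. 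Thus $(H, G, \phi, R)$ is a relative Rota-Baxter group, and the same computation shows $\circ_R = \circ$ on $H$.

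Finally I would take $M := Gr(R) = \{(\theta(h), h) \mid h \in H\}$. By Theorem \ref{SubgrptoRB} this is a subgroup of $G \ltimes_{\phi} H$; it has order $|H|$ and projects bijectively onto $H$, so $M \in \textbf{S}(H, G, \phi)$. By Remark \ref{iso H circ and Gr}, $M \cong H^{(\circ_R)} = H^{(\circ)} \cong G$, completing this direction. The main obstacle is precisely this mismatch between the intrinsic target group $H^{(\circ)}$ returned by Proposition \ref{skew left brace to rrbg} and the externally prescribed group $G$: the change of target along $\theta$ is not literally a morphism in the sense of Definition \ref{morphism of rel rbo} (which keeps $G$ fixed), so it cannot be invoked as functoriality and instead requires the short self-contained check of the axiom sketched above.
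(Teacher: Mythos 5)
Your proof is correct, and both directions follow the same overall route as the paper: the backward implication via Proposition \ref{cal}, Proposition \ref{rrb2sb} and Remark \ref{iso H circ and Gr}, and the forward implication via the relative Rota-Baxter group $(H^{(\cdot)}, H^{(\circ)}, \lambda, \Id_H)$ of Proposition \ref{skew left brace to rrbg} and its graph. The one place where you genuinely improve on the paper is precisely the point you flag as the ``main obstacle'': the paper's own proof stops after observing that $Gr(\Id_H)$ lies in $\textbf{S}(H^{(\cdot)}, H^{(\circ)}, \lambda)$ and is isomorphic to $H^{(\circ)}$ --- that is, it produces a subgroup of $H^{(\circ)} \ltimes_{\lambda} H$ rather than of $G \ltimes_{\phi} H$ for an action $\phi: G \rightarrow \Aut(H)$ as the statement literally requires, leaving the identification of $H^{(\circ)}$ with $G$ implicit. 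You transport the structure explicitly along a fixed isomorphism $\theta: H^{(\circ)} \rightarrow G$, setting $\phi = \lambda \circ \theta^{-1}$ and $R = \theta$, verify the relative Rota-Baxter axiom by a direct computation (correctly noting that this change of target group is not a morphism in the sense of Definition \ref{morphism of rel rbo}, so no functoriality argument is available), and then exhibit $M = Gr(R) \in \textbf{S}(H, G, \phi)$ with $M \cong H^{(\circ)} \cong G$ via Theorem \ref{SubgrptoRB} and Remark \ref{iso H circ and Gr}. The paper's version is shorter; yours actually delivers an element of $\textbf{S}(H, G, \phi)$ exactly as stated, so it closes a small but real imprecision in the published argument at the cost of one routine verification.
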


\begin{proof}
	Suppose that there exists $M \in \textbf{S}(H, G, \phi)$ for some action $\phi: G \rightarrow \Aut(H)$, such that $M$ is isomorphic to $G$. By Remark \ref{iso H circ and Gr}, we know that $M$ is isomorphic to the multiplicative group of the skew left brace induced by $R_M$. Since $M$ is isomorphic to $G$, it follows that the skew left brace induced by $R_M$ has a multiplicative group isomorphic to $G$.
\par
Conversely, suppose that there exists a skew left brace structure $(H, \cdot, \circ)$ such that $H^{(\circ)}$ is isomorphic to $G$. Let $\lambda: H^{(\circ)} \rightarrow \Aut(H^{(\cdot)})$ be the $\lambda$-map associated to the skew left brace. By Proposition \ref{skew left brace to rrbg}, we know that $(H^{(\cdot)}, H^{(\circ)}, \lambda, \Id_H)$ is a relative Rota-Baxter group. Further, it follows that $Gr(\Id_H)$ is an element of $\textbf{S}(H^{(\cdot)}, H^{(\circ)} , \lambda)$ and is isomorphic to $H^{(\circ)}$.
\end{proof}

Let $M, N \in \textbf{S}(H, G, \phi)$ such that $M \cong N$. In view of Proposition \ref{cal}, the groups $M$ and $N$ correspond to the  graphs of $R_M$ and $R_N$, respectively. Since $M\cong N$, it follows that $H^{(\circ_{R_M})} \cong H^{(\circ_{R_N})}$. This observation leads to the following natural question.

\begin{question}\label{imageiso}
	Let $(H,G, \phi,R)$ and $(H,G, \phi,S)$ be two relative Rota-Baxter groups such that $Gr(R) \cong  Gr(S)$. Can we conclude that the groups $R(H)$ and $S(H)$ are isomorphic?
\end{question}

In general, the answer to Question \ref{imageiso} is not always positive. For instance, take $H=G=\mathbb{Z}_5$ and $\phi: G \rightarrow \Aut(H)$ the trivial action. In this case, the groups $M= \langle (1,1)\rangle$ and $N= \{0\} \times \mathbb{Z}_5$ are isomorphic and both lie in $\textbf{S}(H,G,\phi)$. The induced relative Rota-Baxter operators $R_M$ and $R_N$ are the identity map and the trivial homomorphism, respectively, and hence $R_M(H)=\mathbb{Z}_5$ and $R_N(H)=\{0\}$. Thus, $R_M(H) \not\cong R_N(H)$.

We are interested in isomorphism classes of skew left braces induced by relative Rota-Baxter groups. Let $H_R$ and $H_B$ be skew left braces induced by relative Rota-Baxter groups 
$(H, G, \phi, R)$ and $(H, G, \phi, B)$, respectively. Let `$\cdot$' denote the group structure on $H$. Then $H_R \cong H_B$ if and only if there exists a group isomorphism $\psi_H: H^{(\cdot)} \rightarrow H^{(\cdot)}$ such that
\begin{align}\label{isorb}
 \psi_H \; \phi_{R(h)}= \phi_{B(\psi_H(h))} \; \psi_H
\end{align}
 holds for all $h \in H$.
 
It follows from Definition \ref{morphism of rel rbo} that if $(\psi_H, \psi_G): (H, G, \phi, R) \to (H, G, \phi, B)$ is a morphism, then $\psi_H: H_R \to H_B$ is a homomorphism of induced skew left braces. Hence, isomorphic  relative Rota-Baxter groups induce isomorphic skew left braces. This answers a question of Bardakov and Gubarev \cite[p. 21]{VV2022} as follows.

\begin{cor}
Two Rota-Baxter operators $R$ and $B$ on a group $H$  induce isomorphic skew left braces if and only if there exists an isomorphism $\psi_H : H \rightarrow H $ such that
\begin{align}\label{RBisom}
\psi_H(R(h))^{-1}  B(\psi_H(h)) \in \Z(H) \quad \textrm{for all} \quad h \in H.
\end{align}
In particular, if $H$ has trivial center, then \eqref{RBisom} can be reduced to
\begin{align}\label{clessg}
\psi_H \; R=B \;  \psi_H.
\end{align}
\end{cor}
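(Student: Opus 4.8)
The plan is to specialize the general isomorphism criterion \eqref{isorb} to the Rota-Baxter setting, where the second group $G$ coincides with $H$ and the homomorphism $\phi\colon H \to \Aut(H)$ is the adjoint action $\phi_g(x) = g x g^{-1}$. By the discussion immediately preceding the corollary, the skew left braces induced by $R$ and $B$ are isomorphic precisely when there is a group isomorphism $\psi_H\colon H \to H$ satisfying $\psi_H \, \phi_{R(h)} = \phi_{B(\psi_H(h))} \, \psi_H$ for all $h \in H$, so the whole task reduces to rewriting this single identity.

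First I would evaluate both sides of this identity on an arbitrary $x \in H$. Using that $\phi$ is conjugation and that $\psi_H$ is a homomorphism, the left-hand side becomes $\psi_H(R(h)) \, \psi_H(x) \, \psi_H(R(h))^{-1}$ while the right-hand side becomes $B(\psi_H(h)) \, \psi_H(x) \, B(\psi_H(h))^{-1}$. Since $\psi_H$ is bijective, as $x$ ranges over $H$ the element $\psi_H(x)$ ranges over all of $H$, so the identity is equivalent to requiring that $\psi_H(R(h))$ and $B(\psi_H(h))$ conjugate every element of $H$ in the same way.

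Next I would convert ``induces the same inner automorphism'' into a membership condition in the center: two elements $a, b \in H$ satisfy $a y a^{-1} = b y b^{-1}$ for all $y \in H$ if and only if $b^{-1} a$ commutes with every element of $H$, that is, $b^{-1} a \in \Z(H)$. Taking $a = \psi_H(R(h))$ and $b = B(\psi_H(h))$ yields exactly \eqref{RBisom}, after noting that $\Z(H)$ is closed under inversion so that $b^{-1}a \in \Z(H)$ is the same as $a^{-1} b = \psi_H(R(h))^{-1} B(\psi_H(h)) \in \Z(H)$. Since every step in this chain is reversible, this establishes the ``if and only if''. For the final assertion, when $\Z(H)$ is trivial the membership $\psi_H(R(h))^{-1} B(\psi_H(h)) \in \Z(H)$ forces $\psi_H(R(h))^{-1} B(\psi_H(h)) = 1$, hence $B(\psi_H(h)) = \psi_H(R(h))$ for all $h$, which is precisely the operator identity $B \, \psi_H = \psi_H \, R$ of \eqref{clessg}.

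There is no genuine obstacle here, as the argument is a direct specialization of the already-established criterion \eqref{isorb}; the only point demanding mild care is the bookkeeping of inverses, since \eqref{RBisom} is phrased with $\psi_H(R(h))^{-1} B(\psi_H(h))$ whereas the natural computation first produces $B(\psi_H(h))^{-1} \psi_H(R(h))$, and one must invoke that the center is a subgroup to pass between the two.
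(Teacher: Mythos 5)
Your proposal is correct and follows exactly the paper's route: the paper's own proof simply notes that a Rota-Baxter operator is a relative Rota-Baxter operator with the adjoint action and declares the result ``immediate'' from \eqref{isorb}, and your argument is precisely the filled-in version of that one-liner (evaluating $\psi_H \, \phi_{R(h)} = \phi_{B(\psi_H(h))} \, \psi_H$ on elements, using bijectivity of $\psi_H$, and translating equality of inner automorphisms into centrality of $\psi_H(R(h))^{-1} B(\psi_H(h))$). No discrepancies; your handling of the inverse/subgroup bookkeeping is the only detail the paper leaves implicit.
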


\begin{proof}
A Rota-Baxter operator is a special case of a relative Rota-Baxter operator with the action $\psi:H \to \Aut(H)$ being the adjoint action. The result now follows immediately from \eqref{isorb}.
\end{proof}

Note that every skew left brace structure on a complete group is induced by some Rota-Baxter operator on that group \cite[Proposition 3.12]{VV2022}. The skew brace structures for groups of order $96$ are currently unknown \cite{VVY022}. We use Theorem \ref{cal} to count the total number of Rota-Baxter operators on centerless groups of order $96$. Condition \eqref{clessg} is very useful  to count equivalence classes of Rota-Baxter operators on small order groups since it is independent of the action $\phi$.

\begin{Algo}
Let $G$ and $H$  be finite groups and $\phi: G \rightarrow \Aut(H)$ an action. Let $S:= G \ltimes_{\phi} H$, $\mathcal{M}$ be the set of all subgroups of $S$ and $\mathcal{N}$ be the subset of $\mathcal{M}$ consisting of those subgroups whose order is equal to order of $H$. We want to determine elements $A \in \mathcal{N}$ such that the natural projection $ A \rightarrow H$ onto the second coordinate  is onto. But, there is no natural way to define such a map, since $H$ is not stored in the GAP library in the form of tuples. So, we first set some maps to make the task easy.

\begin{enumerate}
\item $E_1:= Embedding(S,1)$, this gives embedding of  $G$ inside $S$.
\item $E_2:= Embedding(S,2)$, this gives an embedding of  $H$ inside $S$.
\item $p:=Projection(S)$, this gives projection of $S$ onto $G$. 
\item Define $C: S \rightarrow E_2(H)$ by $C(x)=x \; E_1(p(x)^{-1})$.  The map $C$ will work as a natural projection map.
\end{enumerate}
Now a subgroup $A \in \textbf{S}(H, G, \phi)$ if and only if $C|_{A}: A \rightarrow E_2(H)$ is onto. In other words,  $A \in \textbf{S}(H, G, \phi)$ when the cardinality of $C|_{A}(A)$ is the same as that of $H$. Hence,  the number of relative Rota-Baxter operators from  $H$ to $G$ with respect to the action $\phi$ is equal to the number of elements of $\mathcal{N}$ for which the restriction of the map $C$ is a bijection.

Now assume that $G=H$ and $\phi$ is the adjoint action. In this case, we give an algorithm to explicitly define all Rota-Baxter operators on $G$. 

Let $C_A:=C|_{A}$ for $A \in \textbf{S}(G, G, \phi)$. Define $R_A: G \rightarrow G$ by
$$R_A(x):= p(C^{-1}_A(E_2(x) )).$$
Then the map $R_A$ is a Rota-Baxter operator on $H$. Next, define a relation $\sim$  on $\textbf{S}(G, G, \phi)$ by the following rule. For $A, B \in \textbf{S}(G, G, \phi)$, we say $A \sim B$ if there exists an automorphism $\psi_H : H \rightarrow H$ such that 
$$ \psi_H(R_A(h))^{-1}  R_B(\psi_H(h)) \in \Z(H) \quad \textrm{for all} \quad h \in H. $$   

The number of $\sim$ equivalence classes gives a lower bound on the number of skew left brace structures on $H$. Further,  if $\Z(H)$ is trivial, then the number of $\sim$ equivalence classes  is precisely equal to the number of skew left brace structures on $H$.
\end{Algo}

Let  $|RBO|$  denote the total number of Rota-Baxter operators on a given group. Using GAP, we have discovered that there are  5 centerless groups of order  96.
\medskip
\begin{center}
\begin{tabular}{|l|*{6}{c|}r}
	\hline
	GAP Group Id              & (96,64) & (96,70) & (96, 71) & (96, 72) & (96, 227)  \\
	\hline
$|RBO|$               & 352 & 1512 & 528 & 552 & 4504\\ 
	\hline
\end{tabular}
\end{center}
\medskip

We have shown  that every relative Rota-Baxter group  $(H,G, \phi, R)$ can be identified by an element in the set $\textbf{S}(H,G,\phi)$. It is well-known that non-isomorphic skew left braces can have isomorphic additive and multiplicative groups. Hence, if we have $M, N \in \textbf{S}(H,G,\phi)$ such that $M\cong N$, we cannot assume that the corresponding skew left braces induced by $M$ and $N$ are isomorphic. Consequently, the relative Rota-Baxter groups $(H, G, \phi, R_M)$ and $(H, G, \phi, R_N)$ induced by $M$ and $N$, respectively, may not be isomorphic. This poses the following problem:

\begin{Prob}
Given $M, N \in \textbf{S}(H, G, \phi)$, under what conditions are the skew left braces induced by $R_M$ and $R_N$ are isomorphic? In stronger terms, under what conditions are the relative Rota-Baxter groups $(H, G, \phi, R_M)$ and $(H, G, \phi, R_N)$ isomorphic?
\end{Prob}

It is well-known that, a skew left brace structure on a group $H$ can be identified with a regular subgroup of the Holomorph  $\Hol(A)$ of $H$. Furthermore, if the corresponding subgroup is normal, the resulting skew left brace is a bi-skew left brace \cite[Theorem 3.4]{AC20}. A similar situation arises when examining a skew left brace structure on $H$ via a group lying in $\textbf{S}(H, G, \phi)$.

\begin{Prob}
What can we say about the skew left brace $H_M$ induced by $R_M$, when $M$ is a normal or a characteristic subgroup of $G \ltimes_{\phi} H$ lying in $\textbf{S}(H, G, \phi)$?
\end{Prob} 
\medskip

\section{Morphisms of relative Rota-Baxter groups}
We now provide a broader definition of morphism of relative Rota-Baxter groups since the original definition is applicable only to relative Rota-Baxter groups with respect to the same action.

	Let  $(H, G, \phi, R)$ be a relative Rota-Baxter group, and let $K \leq H$ and $L \leq G$ be subgroups.
	\begin{enumerate}
\item If $K$ is $L$-invariant under the action $\phi$, then we denote the restriction of $\phi$ by $\phi|: L \to \Aut(K)$. 
\item If $R(K) \subseteq L$, then we denote the restriction of $R$ by $R|: K \to L$.
\end{enumerate}

\begin{defn}
	Let $(H,G,\phi,R)$ be a relative Rota-Baxter group, and $K\leq H$ and $L\leq G$ be subgroups. Suppose that  $\phi_\ell(K) \subseteq K$ for all $\ell \in L$ and $R(K) \subseteq L$. Then $(K,L,\phi |,R |)$ is a relative Rota-Baxter group, which we refer as a relative Rota-Baxter subgroup of $(H,G,\phi,R)$ and write $(K,L,\phi |,R |)\leq(H,G,\phi,R)$.
\end{defn}

\begin{defn}
Let $(H, G, \phi, R)$ and $(K, L, \varphi, S)$ be two relative Rota-Baxter groups.
\begin{enumerate}
\item A homomorphism $(\psi, \eta): (H, G, \phi, R) \to (K, L, \varphi, S)$ of relative Rota-Baxter groups is a pair $(\psi, \eta)$, where $\psi: H \rightarrow K$ and $\eta: G \rightarrow L$ are group homomorphisms such that
\begin{equation}\label{rbb datum morphism}
\eta \; R = S \; \psi \quad \textrm{and} \quad \psi \; \phi_g =  \varphi_{\eta(g)} \; \psi \quad \textrm{for all} \quad g \in G.
\end{equation}

\item The kernel of a homomorphism $(\psi, \eta): (H, G, \phi, R) \to (K, L, \varphi, S)$ of relative Rota-Baxter groups is the quadruple $$(\Ker(\psi), \Ker(\eta), \phi|, R|),$$ where $\Ker(\psi)$ and $\Ker(\eta)$ denote the kernels of the group homomorphisms $\psi$ and $\eta$, respectively. The conditions in \eqref{rbb datum morphism} imply that the kernel is itself a relative Rota-Baxter group.

\item The image of a homomorphism $(\psi, \eta):  (H, G, \phi, R) \to (K, L, \varphi, S)$ of relative Rota-Baxter groups is the quadruple 
$$(\IM(\psi), \IM(\eta), \varphi|, S| ),$$ where $\IM(\psi)$ and $\IM(\eta)$ denote the images of the group homomorphisms $\psi$ and $\eta$, respectively. The image is itself a relative Rota-Baxter group.

\item A homomorphism $(\psi, \eta)$ of relative Rota-Baxter groups is called an isomorphism if both $\psi$ and $\eta$ are group isomorphisms. Similarly, we say that $(\psi, \eta)$ is an embedding of a relative Rota-Baxter group if both $\psi$ and $\eta$ are embeddings of groups.
\end{enumerate}
\end{defn}
\begin{remark}
Clearly, if $(\psi, \eta): (H, G, \phi, R) \to (K, L, \varphi, S)$ is a homomorphism of relative Rota-Baxter groups, then $(\Ker(\psi), \Ker(\eta), \phi|, R|)$ is a relative Rota-Baxter subgroup  of $(H,G,\phi,R)$ and  $(\IM(\psi), \IM(\eta), \varphi|, S|)$ is a relative Rota-Baxter subgroup of $(K, L, \varphi, S)$.
\end{remark}

\begin{prop}\label{rrb to slb homo}
A homomorphism of relative Rota-Baxter groups induces a homomorphism of corresponding skew left braces. 
\end{prop}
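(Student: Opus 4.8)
The plan is to show that the additive-group homomorphism $\psi : H \to K$ coming from the given homomorphism $(\psi, \eta): (H, G, \phi, R) \to (K, L, \varphi, S)$ serves directly as the required homomorphism between the induced skew left braces $H_R = (H, \cdot, \circ_R)$ and $K_S = (K, \cdot, \circ_S)$, these being the skew left braces supplied by Proposition \ref{rrb2sb}. By the definition of a homomorphism of skew left braces, two conditions must be checked: compatibility with the additive operations and compatibility with the multiplicative operations $\circ_R$ and $\circ_S$. The additive condition $\psi(h_1 \cdot h_2) = \psi(h_1) \cdot \psi(h_2)$ holds for free, since $\psi$ is already a homomorphism of the underlying groups $(H, \cdot)$ and $(K, \cdot)$. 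So the entire content lies in the multiplicative condition
$$\psi(h_1 \circ_R h_2) = \psi(h_1) \circ_S \psi(h_2).$$

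To establish this, I would expand the left-hand side using the definition $h_1 \circ_R h_2 = h_1 \cdot \phi_{R(h_1)}(h_2)$ together with the multiplicativity of $\psi$, obtaining $\psi(h_1) \cdot \psi(\phi_{R(h_1)}(h_2))$. The two defining relations of a relative Rota-Baxter group homomorphism are then exactly what is needed: the intertwining relation $\psi \, \phi_g = \varphi_{\eta(g)} \, \psi$ (applied with $g = R(h_1)$) rewrites $\psi(\phi_{R(h_1)}(h_2))$ as $\varphi_{\eta(R(h_1))}(\psi(h_2))$, and the relation $\eta \, R = S \, \psi$ then replaces $\eta(R(h_1))$ by $S(\psi(h_1))$. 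This yields $\psi(h_1) \cdot \varphi_{S(\psi(h_1))}(\psi(h_2))$, which is precisely $\psi(h_1) \circ_S \psi(h_2)$ by the definition of $\circ_S$.

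Alternatively, using the $\lambda$-map reformulation in the remark following the definition of a skew left brace homomorphism, one first observes that the $\lambda$-maps of the induced braces are $\lambda^H_h = \phi_{R(h)}$ and $\lambda^K_k = \varphi_{S(k)}$, so that the single required identity $\psi \, \lambda^H_h = \lambda^K_{\psi(h)} \, \psi$ collapses to $\psi \, \phi_{R(h)} = \varphi_{S(\psi(h))} \, \psi$, again an immediate consequence of the two defining relations. I do not expect any genuine obstacle here; the argument is a direct unwinding, and the only point requiring care is correctly chaining the intertwining relation together with $\eta \, R = S \, \psi$ so that the subscript of the action on the target becomes $S(\psi(h_1))$ rather than something still involving $\eta$.
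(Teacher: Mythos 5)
Your proposal is correct and follows essentially the same argument as the paper: the paper's proof is exactly the chain $\psi(x \circ_R y) = \psi(x)\,\psi(\phi_{R(x)}(y)) = \psi(x)\,\varphi_{\eta(R(x))}(\psi(y)) = \psi(x)\,\varphi_{S(\psi(x))}(\psi(y)) = \psi(x) \circ_S \psi(y)$, using the intertwining relation and then $\eta\, R = S\, \psi$, just as you do. Your remark that additive compatibility is automatic and your alternative $\lambda$-map phrasing are both consistent with, and add nothing beyond, the paper's one-line computation.
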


\begin{proof}
Let $(\psi, \eta): (H, G, \phi, R) \to (K, L, \varphi, S)$ be a homomorphism of relative Rota-Baxter groups. Let $H_R$ and $K_S$ be induced skew  braces. Then, for $x, y \in H$, we have
$$\psi(x \circ_R y)=\psi(x \phi_{R(x)}(y))=\psi(x) \psi(\phi_{R(x)}(y))= \psi(x)  \varphi_{\eta(R(x))}\psi(y)=\psi(x)  \varphi_{S (\psi(x))}\psi(y)=\psi(x) \circ_{S}\psi(y),$$
and hence $\psi: H_R \to K_S$ is a homomorphism of induced skew left braces. 
\end{proof}

Suppose that $(H, \cdot, \circ)$ is a skew left brace and $\lambda : H^{(\circ)} \rightarrow \Aut (H^{(\cdot)})$ the associated $\lambda$-map. Then  the quadruple $(H^{(\cdot)}, H^{(\circ)}, \lambda, \mathrm{Id}_H)$ is called the relative Rota-Baxter group  induced by the skew left brace $(H, \cdot, \circ)$.

\begin{prop}\label{slb to rrb homo} 
A homomorphism of skew left braces induces a homomorphism of corresponding relative Rota-Baxter groups.
\end{prop}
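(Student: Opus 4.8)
The plan is to take the underlying set map of the given skew left brace homomorphism and show that, read appropriately on the additive and multiplicative groups, it furnishes a homomorphism of the induced relative Rota-Baxter groups in the sense of the broader definition above. Concretely, let $\psi: (H, \cdot_H, \circ_H) \to (K, \cdot_K, \circ_K)$ be a homomorphism of skew left braces, with associated $\lambda$-maps $\lambda^H$ and $\lambda^K$. The induced relative Rota-Baxter groups are $(H^{(\cdot_H)}, H^{(\circ_H)}, \lambda^H, \Id_H)$ and $(K^{(\cdot_K)}, K^{(\circ_K)}, \lambda^K, \Id_K)$. I would define the candidate morphism to be the pair $(\psi, \psi)$, where the first copy of $\psi$ is read as the map $\psi: H^{(\cdot_H)} \to K^{(\cdot_K)}$ and the second as $\psi: H^{(\circ_H)} \to K^{(\circ_K)}$; both are group homomorphisms precisely because $\psi$ respects $\cdot$ and $\circ$.

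It then remains to verify the two defining conditions of a homomorphism of relative Rota-Baxter groups. The first condition, $\eta \; R = S \; \psi$ with $R = \Id_H$ and $S = \Id_K$, reduces immediately to the tautology $\psi = \psi$. The second condition requires $\psi \; \lambda^H_a = \lambda^K_{\psi(a)} \; \psi$ for all $a \in H$, and this is the main point. I would establish it by the direct computation
$$\psi\big(\lambda^H_a(b)\big) = \psi\big(a^{-1} \cdot_H (a \circ_H b)\big) = \psi(a)^{-1} \cdot_K \big(\psi(a) \circ_K \psi(b)\big) = \lambda^K_{\psi(a)}\big(\psi(b)\big),$$
where the middle equality uses that $\psi$ preserves $\cdot$, hence $\cdot$-inverses, as well as $\circ$, and the outer equalities are just the definition of the $\lambda$-maps. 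Equivalently, this intertwining relation is exactly the characterization of skew left brace homomorphisms recorded in the remark following the definition of such homomorphisms, so no separate argument is needed once that remark is invoked.

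Since the verification rests only on the formula for $\lambda$ together with the two homomorphism identities for $\psi$, I do not anticipate a genuine obstacle. The single point demanding care is bookkeeping: keeping track of which group structure ($\cdot$ or $\circ$) each occurrence of $\psi$ is acting on, and correctly matching the abstract roles of $(H, G, \phi, R)$ in the definition with the concrete data $(H^{(\cdot)}, H^{(\circ)}, \lambda, \Id_H)$ of the induced object, so that the conditions $\eta \; R = S \; \psi$ and $\psi \; \phi_g = \varphi_{\eta(g)} \; \psi$ are read with $\phi = \lambda^H$, $\varphi = \lambda^K$, and $\eta = \psi$ acting on the multiplicative groups.
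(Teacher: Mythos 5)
Your proposal is correct and follows essentially the same route as the paper: both take the pair $(\psi,\psi)$, note that the condition $\eta \, \Id_H = \Id_K \, \psi$ is automatic, and establish the intertwining relation $\psi \, \lambda^H_a = \lambda^K_{\psi(a)} \, \psi$ from the fact that $\psi$ preserves both operations. The only difference is that you write out the one-line computation for the $\lambda$-intertwining explicitly, whereas the paper simply asserts it; your version is the same argument with the details filled in.
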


\begin{proof}
Let $\psi: (H, \cdot_H, \circ_H) \to (K, \cdot_K,  \circ_K)$ be a morphism of skew left braces. Then $\psi$ can be viewed as a homomorphism $H^{(\cdot_H)} \to K^{(\cdot_K)}$ and $H^{(\circ_H)} \to K^{(\circ_K)}$. Further, we see that
$\psi \; \Id_H = \Id_K \; \psi$ and $\psi \; \lambda^H_h =  \lambda^K_{\psi(h)} \; \psi$ for all $h \in H$, where $\lambda^H$ and $\lambda^K$ are the $\lambda$-maps of $(H, \cdot_H, \circ_H)$ and $(K, \cdot_K, \circ_K)$, respectively. Hence, $(\psi, \psi): (H^{(\cdot_H)}, H^{(\circ_H)}, \lambda^H, \Id_H ) \to (K^{(\cdot_K)}, K^{(\circ_K)}, \lambda^K, \Id_K )$ is a homomorphism of relative Rota-Baxter groups.
\end{proof}

\begin{prop}\label{bihom}
Let $(H, G, \phi, R)$ and $(K, L, \varphi, S)$ be two relative Rota-Baxter groups such that $R$ and $S$ are bijections. Then, there is a one-to-one correspondence between the set of homomorphisms from $H_R$ to $K_S$ and the set of homomorphisms from $(H, G, \phi, R)$ to $(K, L, \varphi, S)$.
\end{prop}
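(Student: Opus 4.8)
The plan is to show that the assignment $(\psi,\eta) \mapsto \psi$ is the desired bijection, where, by Proposition \ref{rrb to slb homo}, each homomorphism $(\psi,\eta): (H,G,\phi,R) \to (K,L,\varphi,S)$ of relative Rota-Baxter groups restricts to a homomorphism $\psi: H_R \to K_S$ of the induced skew left braces. The key preliminary observation is that, by Proposition \ref{R homo H to G}, the maps $R$ and $S$ are group homomorphisms $R: H^{(\circ_R)} \to G$ and $S: K^{(\circ_S)} \to L$; being bijective, they are therefore group isomorphisms, and in particular $R^{-1}: G \to H^{(\circ_R)}$ is a group isomorphism.

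First I would prove injectivity. The first relation in \eqref{rbb datum morphism}, namely $\eta \, R = S \, \psi$, determines $\eta$ on the image $R(H)$; since $R$ is surjective, $\eta$ is completely determined by $\psi$, and explicitly $\eta = S \, \psi \, R^{-1}$. Consequently, if $(\psi,\eta)$ and $(\psi,\eta')$ are homomorphisms of relative Rota-Baxter groups sharing the same first component, then $\eta \, R = \eta' \, R$ forces $\eta = \eta'$. Hence $(\psi,\eta) \mapsto \psi$ is injective.

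For surjectivity, I would start with an arbitrary homomorphism of skew left braces $\psi: H_R \to K_S$ and define $\eta := S \, \psi \, R^{-1}: G \to L$. As a composite of the group homomorphisms $R^{-1}: G \to H^{(\circ_R)}$, $\psi: H^{(\circ_R)} \to K^{(\circ_S)}$ and $S: K^{(\circ_S)} \to L$, the map $\eta$ is a group homomorphism, and $\eta \, R = S \, \psi$ holds by construction. To verify the second relation in \eqref{rbb datum morphism}, I would expand the identity $\psi(h_1 \circ_R h_2) = \psi(h_1) \circ_S \psi(h_2)$ using $h_1 \circ_R h_2 = h_1 \cdot \phi_{R(h_1)}(h_2)$ and the analogous formula for $\circ_S$, and then use that $\psi$ preserves the additive operation to cancel $\psi(h_1)$; this yields $\psi(\phi_{R(h_1)}(h_2)) = \varphi_{S(\psi(h_1))}(\psi(h_2))$ for all $h_1, h_2 \in H$, that is, $\psi \, \phi_{R(h_1)} = \varphi_{\eta(R(h_1))} \, \psi$. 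Since $R$ is surjective, every $g \in G$ equals $R(h_1)$ for some $h_1$, so $\psi \, \phi_g = \varphi_{\eta(g)} \, \psi$ for all $g$, and $(\psi,\eta)$ is a homomorphism of relative Rota-Baxter groups mapping to $\psi$.

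I expect the only genuine content to lie in this last verification: the cancellation establishes the compatibility $\psi \, \phi_{R(h_1)} = \varphi_{\eta(R(h_1))} \, \psi$ a priori only on the image $R(H)$, and it is precisely the surjectivity of $R$ that upgrades it to all of $G$. Everything else is bookkeeping resting on the fact that $R$ is a group isomorphism from the descendent group. It is worth noting that, in this argument, only the bijectivity of $R$ is actually used; $S$ enters merely as a group homomorphism, so the hypothesis that $S$ is bijective, while making the statement symmetric, is not strictly needed for the correspondence.
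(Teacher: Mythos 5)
Your proof is correct and follows essentially the same route as the paper: both arguments produce the companion map as $\eta = S \, \psi \, R^{-1}$ and then verify the two conditions of \eqref{rbb datum morphism}, with the compatibility $\psi \, \phi_g = \varphi_{\eta(g)} \, \psi$ obtained by expanding $\psi(h_1 \circ_R h_2) = \psi(h_1) \circ_S \psi(h_2)$, cancelling $\psi(h_1)$, and invoking the surjectivity of $R$. The differences are organizational but worth recording. Where the paper checks that $\eta$ is a group homomorphism by a direct computation with the Rota-Baxter identities, you get it at once as the composite of the group homomorphisms $R^{-1}\colon G \to H^{(\circ_R)}$, $\psi\colon H^{(\circ_R)} \to K^{(\circ_S)}$ and $S\colon K^{(\circ_S)} \to L$ furnished by Proposition \ref{R homo H to G}; the paper's computation is exactly this composition unwound. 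You also make explicit the injectivity of the assignment $(\psi,\eta) \mapsto \psi$ (the relation $\eta \, R = S \, \psi$ together with surjectivity of $R$ pins down $\eta$), a point the paper dispatches with ``it follows immediately.'' Finally, your closing remark is correct and slightly sharpens the statement: only the bijectivity of $R$ is used anywhere in the argument, whereas the paper claims that well-definedness of $\eta$ requires both $R$ and $S$ to be bijections --- in fact $S$ enters only as the group homomorphism $S\colon K^{(\circ_S)} \to L$, so the correspondence holds whenever $R$ alone is bijective.
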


\begin{proof}
	Let $\psi: H_R \rightarrow K_S$ be a homomorphism of skew left braces. Then, for all $h \in H$, we have
\begin{equation}\label{bhom0}
	\psi \; \phi_{R(h)} =\varphi_{S(\psi(h))} \; \psi.
\end{equation}
	 Define $\eta: G \rightarrow L$, by $\eta(g)=S(\psi(h_g))$, where $g=R(h_g)$ for some unique $h_g \in H$. The map $\eta$ is well-defined since $R$ and $S$ are bijections. For $g_1, g_2 \in G$, we have 
	\begin{equation}\label{bhom}
		\eta(g_1 g_2)=S(\psi(h_{g_1 g_2})),
	\end{equation}
	where $R(h_{g_1 g_2})= g_1 g_2$. Given that $g_1= R(h_{g_1})$ and $g_2=R(h_{g_2})$, and since $R$ is a relative Rota-Baxter operator, we can write  $h_{g_1} \phi_{R(h_{g_1})}(h_{g_2})=h_{g_1} \circ_R h_{g_2}$. By substituting this value in \eqref{bhom}, noting that $\psi$ is a homomorphism of skew left braces and $S$ is a relative Rota-baxter operator, we obtain
	\begin{align*}
		\eta(g_1 g_2)=\; & S(\psi(h_{g_1} \phi_{R(h_{g_1})}(h_{g_2})))\\
		=\; &S(\psi(h_{g_1}) \varphi_{S(\psi(h_{g_1}))}(\psi(h_{g_2})))\\
		=\;&  S(\psi(h_{g_1})) S(\psi(h_{g_2}))\\
		=\; & \eta(g_1) \eta(g_2).
	\end{align*}
	This shows that $\eta$ is a group homomorphism. By definition of $\eta$, we have $\eta \; R =S\; \psi$. Further, \eqref{bhom0} implies that, for each $g$ in $G$, we have
$$	\psi \; \phi_{g}= \varphi_{\eta(g)} \; \psi.
$$
	Hence, $(\psi, \eta):(H, G, \phi, R) \to (K, L, \varphi, S)$ is a homomorphism of relative Rota-Baxter groups. It follows immediately  that the map $\psi \mapsto (\psi, \eta)$ is a bijection from $\Hom(H_R, K_{S})$ to $\Hom ((H, G, \phi, R), (K, L, \varphi, S))$.

\end{proof}

We say that a  relative Rota-Baxter group $(H, G, \phi, R)$ is bijective if the Rota-Baxter operator $R:H \to G$ is a bijection. 

\begin{thm}\label{iso brrbg and slb}
There is an isomorphism between the category $\mathcal{BRRB}$ of bijective relative Rota-Baxter groups and the category $\mathcal{SLB}$ of skew left braces.
\end{thm}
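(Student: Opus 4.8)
The plan is to exhibit two functors and prove they are mutually inverse. Define $\mathcal{F}\colon \mathcal{BRRB}\to\mathcal{SLB}$ on objects by $\mathcal{F}(H,G,\phi,R)=H_R=(H,\cdot,\circ_R)$ (legitimate by Proposition \ref{rrb2sb}) and on a morphism $(\psi,\eta)$ by $\mathcal{F}(\psi,\eta)=\psi$ (legitimate by Proposition \ref{rrb to slb homo}). Define $\mathcal{G}\colon\mathcal{SLB}\to\mathcal{BRRB}$ on objects by $\mathcal{G}(H,\cdot,\circ)=(H^{(\cdot)},H^{(\circ)},\lambda,\Id_H)$ (legitimate by Proposition \ref{skew left brace to rrbg}, and landing in $\mathcal{BRRB}$ since $\Id_H$ is a bijection) and on a morphism $\psi$ by $\mathcal{G}(\psi)=(\psi,\psi)$ (legitimate by Proposition \ref{slb to rrb homo}). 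First I would record that both assignments are functors: each visibly sends identities to identities, and since the underlying maps compose componentwise, both preserve composition, the only checks being $(\psi',\eta')(\psi,\eta)=(\psi'\psi,\eta'\eta)$ and $(\psi',\psi')(\psi,\psi)=(\psi'\psi,\psi'\psi)$.

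Next I would verify $\mathcal{F}\circ\mathcal{G}=\Id_{\mathcal{SLB}}$ on the nose, which is the clean direction. Given a skew left brace $(H,\cdot,\circ)$, Proposition \ref{skew left brace to rrbg} gives $\mathcal{F}\mathcal{G}(H,\cdot,\circ)=(H,\cdot,\circ_{\Id_H})$, and unwinding the definitions shows $a\circ_{\Id_H}b=a\cdot\lambda_{a}(b)=a\cdot a^{-1}\cdot(a\circ b)=a\circ b$, so $\circ_{\Id_H}=\circ$ identically and the object is returned unchanged. On morphisms $\psi\mapsto(\psi,\psi)\mapsto\psi$, so $\mathcal{F}\circ\mathcal{G}$ is strictly the identity functor on $\mathcal{SLB}$.

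The substantive direction is $\mathcal{G}\circ\mathcal{F}=\Id_{\mathcal{BRRB}}$, and this is where the work concentrates. Computing, $\mathcal{G}\mathcal{F}(H,G,\phi,R)=(H^{(\cdot)},H^{(\circ_R)},\lambda^{H_R},\Id_H)$, where a direct check gives $\lambda^{H_R}_a=\phi_{R(a)}$. The first components agree on the nose, and the remaining three comparisons all reduce to a single identification of the second group. Here the bijectivity hypothesis defining $\mathcal{BRRB}$ is essential: by Proposition \ref{R homo H to G} the map $R\colon H^{(\circ_R)}\to G$ is a group homomorphism, hence a group isomorphism when $R$ is bijective, and it carries $\Id_H$ to $R$ and $\lambda^{H_R}=\phi\circ R$ back to $\phi$. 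I would use this to match the reconstructed quadruple with $(H,G,\phi,R)$ term by term. I expect the main obstacle to be precisely this step: promoting the term-by-term identification to an honest equality $\mathcal{G}\mathcal{F}=\Id_{\mathcal{BRRB}}$, rather than only the isomorphism implemented by the canonical pair $(\Id_H,R)$, is the delicate point, and it is exactly where the bijectivity of $R$—which forces $G=R(H)$ and pins down the second group through $R$—must be used in full.

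Finally I would assemble the conclusion with the help of Proposition \ref{bihom}, which supplies, for bijective relative Rota-Baxter groups, a bijection between $\Hom(H_R,K_S)$ and $\Hom\big((H,G,\phi,R),(K,L,\varphi,S)\big)$ that is inverse to the map $(\psi,\eta)\mapsto\psi$ induced by $\mathcal{F}$. This makes the action of $\mathcal{F}$ on hom-sets a bijection, so that together with the two composite computations the functors $\mathcal{F}$ and $\mathcal{G}$ are mutually inverse on objects and on morphisms, yielding the asserted isomorphism $\mathcal{BRRB}\cong\mathcal{SLB}$. I would close by checking the naturality/compatibility square $\eta\,R=S\,\psi$ built into the definition of a morphism of relative Rota-Baxter groups, which is what guarantees that the object-level identification of the previous paragraph is compatible with all morphisms and hence assembles into the functor identity claimed.
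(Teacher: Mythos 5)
Your proposal follows the paper's proof essentially step for step: the same pair of functors $\mathcal{F}$ and $\mathcal{G}$, the same observation that $\mathcal{F}\circ\mathcal{G}$ is the identity on the nose, and the same canonical comparison $(\Id_H,R)\colon (H^{(\cdot)},H^{(\circ_R)},\lambda,\Id_H)\to(H,G,\phi,R)$ with $\lambda_h=\phi_{R(h)}$ for the other composite. The one place you go beyond the paper is where you hope to ``promote'' this comparison to an honest equality $\mathcal{G}\circ\mathcal{F}=\Id_{\mathcal{BRRB}}$, and you should know that this step is not merely delicate but impossible: the second component of $\mathcal{G}\mathcal{F}(H,G,\phi,R)$ is the group $H^{(\circ_R)}$, whose underlying set is $H$, and in general this is simply a different set from $G$ even when $R$ is bijective (take $H=\mathbb{Z}_2$ on $\{0,1\}$, $G=\{\pm 1\}$ under multiplication, $\phi$ trivial, $R$ the unique isomorphism). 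Worse, $\mathcal{F}$ is not injective on objects --- relabelling $G$ along any bijection and transporting $\phi$ and $R$ accordingly leaves the induced brace $(H,\cdot,\circ_R)$ unchanged --- so no functor $\mathcal{G}$ whatsoever can satisfy $\mathcal{G}\circ\mathcal{F}=\Id_{\mathcal{BRRB}}$. The paper does not attempt this either: its proof stops exactly at the assertion that $(\Id_H,R)$ is an isomorphism of relative Rota--Baxter groups, so what is actually established there (and what your fallback via Proposition \ref{bihom} delivers, since full faithfulness of $\mathcal{F}$ together with $\mathcal{F}\circ\mathcal{G}=\Id_{\mathcal{SLB}}$ yields precisely this) is an \emph{equivalence} of categories implemented by the natural isomorphism $(\Id_H,R)$, with the word ``isomorphism'' in the statement used loosely. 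So your argument is correct to exactly the same extent as the paper's; your instinct that the strict identity is the sticking point is right, and the resolution is not to force it but to state the conclusion as an equivalence (checking, as you indicate at the end, that the squares $\eta\,R=S\,\psi$ make $(\Id_H,R)$ natural in $(H,G,\phi,R)$).
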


\begin{proof}
Define $\mathcal{F} :\mathcal{BRRB} \rightarrow \mathcal{SLB}$ by $\mathcal{F}(H,G,\phi,R) = H_R$ and $\mathcal{G}:\mathcal{SLB} \rightarrow \mathcal{BRRB}$ by $\mathcal{G} (H,\cdot,\circ)=(H^{(\cdot)},H^{(\circ)},\lambda,\text{Id}_H)$. Using Propositions \ref{rrb to slb homo} and \ref{slb to rrb homo}, one can show that $\mathcal{F}$ and $\mathcal{G}$ are functors.
\par

Let $(H, \cdot, \circ)$ be a skew left brace. Since the skew left brace induced by the relative Rota-Baxter group $(H^{(\cdot)},H^{(\circ)},\lambda,\Id_H)$ is the same as $(H, \cdot, \circ)$, it follows that $\mathcal{F}\circ \mathcal{G}$ is the identity functor.
\par

Let $(H,G,\phi,R)$ be a relative Rota-Baxter group. Then the relative Rota-Baxter group induced by $H_R$ is $(H^{(\cdot)}, H^{(\circ_R)}, \lambda, \mathrm{Id}_H)$, where $\lambda: H^{(\circ_R)} \rightarrow \mathrm{Aut}(H^{(\cdot)})$ is defined by $\lambda_h=\phi_{R(h)}$. Since $R:H \to G$ is a bijection, it follows that $R:H^{(\circ_R)} \to G$ is an isomorphism of groups. The facts that $R\; \mathrm{Id}_H=R \; \mathrm{Id}_H$ and $\mathrm{Id}_H \; \lambda_h=\phi_{R(h)}\; \mathrm{Id}_H$ immediately establishes that $(\mathrm{Id}_H, R): (H^{(\cdot)}, H^{(\circ_R)}, \lambda, \mathrm{Id}_H) \rightarrow (H,G,\phi,R)$ is an isomorphism of relative Rota-Baxter groups. 
\end{proof}
\medskip




\section{Substructures in the category of relative Rota-Baxter groups}
We introduce substructures in the category of relative Rota-Baxter groups, and use them to give a comprehensive definition of quotients in this category. 
\par

Let $(H, G, \phi, R)$ be a relative Rota-baxter group, and consider subgroups $K \leq H$ and $L \leq G$.
 We denote the quadruple $(H, \im(R), \phi|, R|)$ by $\I(H, G, \phi, R)$, which is clearly a relative Rota-Baxter group.

\begin{prop}\label{image same skew brace}
Let $(H, G, \phi, R)$ be a relative Rota-Baxter group. Then  the skew left braces induced by $(H, G, \phi, R)$ and $\I(H, G, \phi, R)$ are the same.
\end{prop}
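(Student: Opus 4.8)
The plan is to show that the skew left brace operations produced by the two relative Rota-Baxter groups $(H,G,\phi,R)$ and $\I(H,G,\phi,R)=(H,\im(R),\phi|,R|)$ coincide on the nose, so that the induced skew left braces are literally equal (not merely isomorphic). Recall from Proposition \ref{rrb2sb} that the skew left brace induced by a relative Rota-Baxter group $(H,G,\phi,R)$ is the triple $(H,\cdot,\circ_R)$, where $\cdot$ is the ambient group operation on $H$ and $h_1\circ_R h_2=h_1\,\phi_{R(h_1)}(h_2)$. Both constructions share the same underlying set $H$ and the same additive operation $\cdot$, so the only thing to verify is that the two multiplicative operations $\circ_R$ and $\circ_{R|}$ agree.

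First I would observe that the restricted data are well-defined: since $R$ maps $H$ into $\im(R)$ by definition of the image, the codomain restriction $R|:H\to\im(R)$ makes sense with $R|(h)=R(h)$ for every $h\in H$, and $\phi|$ is just $\phi$ restricted to $\im(R)\le G$, so $\phi|_{\,R(h)}=\phi_{R(h)}$ as automorphisms of $H$. This is exactly the content of the remark preceding the proposition that $\I(H,G,\phi,R)$ is a relative Rota-Baxter group, and it is the only place where one must be slightly careful about domains and codomains.

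Then the verification is immediate. For all $h_1,h_2\in H$ I would compute
$$
h_1\circ_{R|}h_2=h_1\,\phi|_{\,R|(h_1)}(h_2)=h_1\,\phi_{R(h_1)}(h_2)=h_1\circ_R h_2,
$$
using that $R|(h_1)=R(h_1)$ and $\phi|_{\,R(h_1)}=\phi_{R(h_1)}$. Hence $\circ_{R|}=\circ_R$, and since the additive operation $\cdot$ is the same for both, the two induced skew left braces $(H,\cdot,\circ_R)$ and $(H,\cdot,\circ_{R|})$ are identical.

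There is essentially no obstacle here; the statement is true because replacing the target group $G$ by the subgroup $\im(R)$ does not alter any of the values $R(h)$ nor the action $\phi_{R(h)}$ used in the definition of $\circ_R$. The only point requiring (minor) attention is the bookkeeping that $\phi|$ and $R|$ genuinely restrict to the same maps on $H$, which is exactly why the braces come out equal rather than merely isomorphic.
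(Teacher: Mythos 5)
Your proof is correct and matches the paper's approach: the paper simply states that the result "follows from the definition of an induced skew left brace," and your write-up is precisely the explicit verification of that one-line claim, namely that $R|$ and $\phi|$ take the same values as $R$ and $\phi$ on everything used to define $\circ_R$, so the two braces coincide on the nose.
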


\begin{proof}
The proof follows from the definition of an induced skew left brace.
\end{proof}

\begin{defn}\label{defn ideal rbb-datum}
Let $(H, G, \phi, R)$ be a relative Rota-Baxter group and  $(K, L,  \phi|, R|) \leq (H, G, \phi, R)$ its relative Rota-Baxter subgroup. We say that $(K, L,  \phi|, R|)$ is an ideal of $(H, G, \phi, R)$ if 
\begin{align}
& K \trianglelefteq H \quad \mbox{and} \quad L \trianglelefteq G, \label{I0}\\
& \phi_g(K) \subseteq K  \mbox{ for all } g \in G, \label{I1} \\
& \phi_\ell(h) h^{-1} \in K \mbox{ for all } h \in H \mbox{ and }  \ell \in L. \label{I2}
\end{align}
We write $(K, L, \phi|, R|) \trianglelefteq (H, G, \phi, R)$ to denote an ideal of a relative Rota-Baxter group. 
\end{defn}

Next, we introduce quotient of a relative Rota-Baxter group.

\begin{thm}\label{subs}
 Let $(H, G, \phi, R)$ be a relative Rota-Baxter group and $(K, L,  \phi|, R|)$ an ideal of $(H, G, \phi, R)$. Then there are maps $\overline{\phi}: G/L \to \Aut(H/K)$ and  $\overline{R}: H/K \to G/L$ defined by
$$ \overline{\phi}_{\overline{g}}(\overline{h})=\overline{\phi_{g}(h)} \quad \textrm{and} \quad	\overline{R}(\overline{h})=\overline{R(h)}$$
 for $\overline{g} \in G/L$ and $\overline{h} \in H/K$, such that  $(H/K, G/L, \overline{\phi}, \overline{R})$ is a relative Rota-Baxter group.
\end{thm}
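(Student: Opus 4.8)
The plan is to verify that the proposed maps $\overline{\phi}$ and $\overline{R}$ are well-defined on cosets, that $\overline{\phi}$ is a homomorphism into $\Aut(H/K)$, and finally that the relative Rota-Baxter identity descends. I would organise the proof in exactly this order, since each step relies on one of the three defining conditions of an ideal in Definition \ref{defn ideal rbb-datum}.

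First I would establish well-definedness. For $\overline{R}$, given $\overline{h_1} = \overline{h_2}$ in $H/K$, we have $h_1^{-1} h_2 \in K$, and since $R|: K \to L$ maps $K$ into $L$ (using $R(K) \subseteq L$ from the relative Rota-Baxter subgroup structure), I would want $R(h_1)^{-1} R(h_2) \in L$; here I must be careful, because $R$ is only a homomorphism with respect to $\circ_R$, not the ambient product of $H$, so the cleanest route is to pass through the descendent group via Proposition \ref{R homo H to G} and Remark \ref{iso H circ and Gr}, noting that $K$ is a normal subgroup of $H^{(\circ_R)}$ as well. For $\overline{\phi}$, well-definedness requires two things: that $\phi_g(h) K$ depends only on $\overline{g}$ and $\overline{h}$. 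Independence of the representative $h$ uses condition \eqref{I1}, namely $\phi_g(K) \subseteq K$; independence of the representative $g$ uses condition \eqref{I2}, since replacing $g$ by $g\ell$ with $\ell \in L$ changes $\phi_g(h)$ by a factor landing in $K$ precisely because $\phi_\ell(h) h^{-1} \in K$. The normality conditions \eqref{I0} guarantee that $H/K$ and $G/L$ are genuine groups so that the codomains make sense.

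Next I would check that each $\overline{\phi}_{\overline{g}}$ is an automorphism of $H/K$ and that $\overline{\phi}$ is a homomorphism $G/L \to \Aut(H/K)$. These are routine: the automorphism property of $\overline{\phi}_{\overline{g}}$ is inherited from $\phi_g$ together with \eqref{I1}, and the homomorphism property $\overline{\phi}_{\overline{g_1}\,\overline{g_2}} = \overline{\phi}_{\overline{g_1}} \overline{\phi}_{\overline{g_2}}$ follows immediately from the corresponding identity for $\phi$. Finally, I would verify the relative Rota-Baxter identity $\overline{R}(\overline{h_1})\,\overline{R}(\overline{h_2}) = \overline{R}\big(\overline{h_1}\,\overline{\phi}_{\overline{R}(\overline{h_1})}(\overline{h_2})\big)$ by projecting the identity $R(h_1) R(h_2) = R(h_1 \phi_{R(h_1)}(h_2))$ for the original operator; since $\overline{R}$ and $\overline{\phi}$ are defined by reducing representatives modulo $K$ and $L$, this is just an application of the quotient maps to an already-known equation, and it compiles directly once well-definedness is in hand.

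I expect the main obstacle to be the well-definedness of $\overline{R}$, not the verification of the Rota-Baxter identity. The subtlety is that $R$ respects the $\circ_R$-product rather than the additive product $\cdot$ of $H$, so one cannot naively write $R(h_1 h_2) = R(h_1) R(h_2)$; the argument must route through the descendent group $H^{(\circ_R)}$, where $R$ genuinely is a homomorphism, and confirm that $K$ is normal there so that cosets of $K$ are sent to cosets of $L = \overline{R}$'s target. Once it is clear that $K \trianglelefteq H^{(\circ_R)}$ and that $R$ carries $K$ into $L$, the coset computation closes without difficulty, and all remaining steps are mechanical diagram-chasing using \eqref{I1} and \eqref{I2}.
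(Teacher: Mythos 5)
Your proposal is correct, and its emphasis is essentially opposite to that of the paper's own proof, so a comparison is worthwhile. The paper dispatches the quotient construction in two sentences --- the ideal conditions are declared ``tailor-made'' for $\overline{\phi}$ and $\overline{R}$ to be well defined, and the Rota-Baxter identity is said to descend --- and then spends the remainder of its proof on a point you skip entirely: that $\phi_\ell|_K$ is surjective onto $K$ for every $\ell\in L$, i.e.\ that the ideal is itself a genuine relative Rota-Baxter group (omitting this is harmless, since \eqref{I1} applied to $g$ and to $g^{-1}$ already forces $\phi_g(K)=K$ for every $g\in G$). You, by contrast, isolate and solve the one genuinely delicate point that the paper never spells out: well-definedness of $\overline{R}$, which cannot be checked naively because $R$ is multiplicative only for $\circ_R$, not for the ambient product of $H$. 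Your descendent-group route works, but to close it you should make one bridging fact explicit: condition \eqref{I1} forces the additive and $\circ_R$-cosets of $K$ to coincide, since $h\circ_R K=h\cdot\phi_{R(h)}(K)=h\cdot K$. Concretely, if $h_1^{-1}h_2\in K$, then writing $h_1^{\dagger}$ for the $\circ_R$-inverse of $h_1$ one computes
\[
h_1^{\dagger}\circ_R h_2=\phi_{R(h_1)^{-1}}(h_1^{-1}h_2)\in K,
\]
so Proposition \ref{R homo H to G} together with $R(K)\subseteq L$ gives $R(h_1)^{-1}R(h_2)=R\big(h_1^{\dagger}\circ_R h_2\big)\in L$, as required. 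Note that this uses only the coset identification, not normality of $K$ in $H^{(\circ_R)}$; that normality is true, but it is the content of Proposition \ref{IRRB}, which the paper proves only after this theorem, so it is cleaner not to invoke it. The remaining steps of your plan --- well-definedness of $\overline{\phi}$ from \eqref{I0}, \eqref{I1}, \eqref{I2} exactly as you describe, the automorphism and homomorphism properties of $\overline{\phi}$, and projecting the Rota-Baxter identity to the quotients --- are routine and match what the paper intends.
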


\begin{proof}
The conditions (1)--(4) are tailor-made for the maps  $\overline{\phi}$ and  $\overline{R}$ to be well-defined. Further, the map $\overline{R}$ satisfies the relative Rota-Baxter identity since $R$ does so. In order to prove that  $(K, L, \phi|, R|)$  is a relative Rota-Baxter group, it suffices to show that $\phi_\ell$ restricted to $K$ is an automorphism of $K$ for all $\ell \in L$. Since $\phi_\ell$ is already injective for all $\ell \in L$, it remains to show that $\phi_\ell$ restricted to $K$ is surjective. If $k \in K$, then there exists a $h \in H$ such that 
  \begin{align}\label{first}
  \phi_\ell(h)=k.
  \end{align}
Using  condition (2), we can write
   \begin{align}\label{sec}
   \phi_\ell(h)=k' h
\end{align}
for some $k' \in K.$ Comparing \eqref{first} and \eqref{sec} shows that $h \in K$, which is desired.
\end{proof}

\begin{no}
We write $(H, G, \phi, R)/(K, L, \phi|, R|)$ to denote the relative Rota-Baxter group $(H/K, G/L, \overline{\phi}, \overline{R})$.
\end{no}

\begin{defn}
 A sub skew left brace $(I, \cdot, \circ)$ of a skew left brace $(H,\cdot ,\circ)$ is called an ideal of $(H,\cdot ,\circ)$ if $I^{(\cdot)}$ is a normal subgroup of $H^{(\cdot)}$, $I^{(\circ)}$ is a normal subgroup of $H^{(\circ)}$ and $\lambda_a(I) \subseteq I$ for all $a \in H$.
\end{defn}

\begin{prop}\label{IRRB}
If $(K, L, \phi|,R|)$ is  an ideal of $(H, G, \phi, R)$, then $K_{R|}$ is an ideal of $H_R$.
\end{prop}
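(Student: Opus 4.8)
The plan is to verify directly the three defining conditions in the definition of an ideal of a skew left brace, applied to the sub skew left brace $K_{R|}$ sitting inside $H_R$. First I would observe that $K_{R|}$ really is a sub skew left brace of $H_R$: its additive operation is the restriction of $\cdot$, and since $k_1 \circ_{R|} k_2 = k_1 \phi_{R(k_1)}(k_2) = k_1 \circ_R k_2$ for $k_1, k_2 \in K$, its multiplicative operation is the restriction of $\circ_R$. I would also recall, as noted in the proof of Theorem \ref{iso brrbg and slb}, that the $\lambda$-map of $H_R$ is given by $\lambda_h = \phi_{R(h)}$.

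The three conditions to check are then: (i) $K^{(\cdot)} \trianglelefteq H^{(\cdot)}$, (ii) $\lambda_a(K) \subseteq K$ for all $a \in H$, and (iii) $K^{(\circ_R)} \trianglelefteq H^{(\circ_R)}$. Condition (i) is exactly the normality $K \trianglelefteq H$ from \eqref{I0}. Condition (ii) unwinds to $\phi_{R(a)}(K) \subseteq K$, which is immediate from \eqref{I1} because $R(a) \in G$. So the first two conditions are essentially free.

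The hard part will be condition (iii), the normality in the multiplicative group, and this is where I expect the real work to lie. I would compute the $\circ_R$-conjugate explicitly. Writing $\bar h$ for the inverse of $h$ in $(H, \circ_R)$, a short computation gives $\bar h = \phi_{R(h)^{-1}}(h^{-1})$; combining this with the fact that $R: H^{(\circ_R)} \to G$ is a homomorphism (Proposition \ref{R homo H to G}), so that $R(h \circ_R k) = R(h) R(k)$, I obtain
$$ h \circ_R k \circ_R \bar h = h\, \phi_{R(h)}(k)\, \phi_{R(h) R(k) R(h)^{-1}}(h^{-1}) $$
for $h \in H$ and $k \in K$. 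The key step is then to feed in the ideal hypotheses in the right order: by \eqref{I1} we have $\phi_{R(h)}(k) \in K$; since $R(k) \in L$ (as $R(K) \subseteq L$) and $L \trianglelefteq G$ by \eqref{I0}, the conjugate $R(h) R(k) R(h)^{-1}$ lies in $L$, so \eqref{I2} applied to $h^{-1}$ yields $\phi_{R(h) R(k) R(h)^{-1}}(h^{-1})\, h \in K$. Substituting these, the displayed element becomes $h\, k_1 k_2\, h^{-1}$ with $k_1, k_2 \in K$, which lies in $K$ by the normality $K \trianglelefteq H$. Hence $h \circ_R k \circ_R \bar h \in K$, establishing (iii).

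The only subtle point, and the one I would be most careful about, is the bookkeeping inside condition (iii): one must notice that the normality of $L$ in $G$ is precisely what brings $R(h)R(k)R(h)^{-1}$ back into $L$, which is the hypothesis needed to invoke \eqref{I2}. Once this is spotted the computation closes cleanly, and all three conditions together show that $K_{R|}$ is an ideal of $H_R$.
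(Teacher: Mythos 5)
Your proposal is correct and follows essentially the same route as the paper: both verify the two additive conditions directly from \eqref{I0} and \eqref{I1}, then compute $h \circ_R k \circ_R \bar h = h\,\phi_{R(h)}(k)\,\phi_{R(h)R(k)R(h)^{-1}}(h^{-1})$ and dispose of the factors using \eqref{I1}, $R(K)\subseteq L$, and \eqref{I2}. The only difference is cosmetic bookkeeping (you fold everything into a single conjugate $h\,k_1k_2\,h^{-1}$, while the paper splits it into two factors each lying in $K$), and you make explicit the use of $L \trianglelefteq G$ to get $R(h)R(k)R(h)^{-1} \in L$, a point the paper leaves implicit.
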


\begin{proof}
Let $H_R:= (H, \cdot, \circ_R)$ be the skew left brace induced by $R$, as defined in  Proposition \ref{rrb2sb}. Since $(K, L, \phi|,R|)$ is  an ideal of $(H, G, \phi, R)$,  by \eqref{I0} and \eqref{I1}, we have $K^{( \cdot)}$ is a normal subgroup of $H^{( \cdot)}$ and $\phi_{R(h)}(K) \subseteq K$ for all $h \in H$. It remains to show that $K^{(\circ_R)}$ is a normal subgroup of $H^{(\circ_R)}$. If $k \in K$, $h \in H$ and $h^{\dagger}$ denotes the inverse of $h$ in $H^{(\circ_R)}$, then we have 
\begin{align*}
h \circ_R k \circ_R h^{\dagger} = \; & h\circ_R (k \cdot \phi_{R(k)}(h^\dagger))\notag \\
= \; & h \cdot \phi_{R(h)}(k \cdot \phi_{R(k)}(h^\dagger))\notag \\
=\; & h \cdot \phi_{R(h)}(k \cdot \phi_{R(k)}(\phi_{R(h)^{-1}}(h)^{-1})) \notag \\
=  \; &  h \cdot \phi_{R(h)}(k) \cdot \phi_{R(h) R(k)R(h)^{-1}}(h^{-1})\notag \\
= \; & (h \cdot \phi_{R(h)}(k) \cdot h^{-1} )\cdot (h \cdot \phi_{R(h) R(k)R(h)^{-1}}(h^{-1}).\notag
\end{align*}
Since $K^{(\cdot)}$ is a normal subgroup of $H^{(\cdot)}$ and  $\phi_{R(h)}(K) \subseteq K$ for all $h \in H$,  it follows that $h \cdot \phi_{R(h)}(k) \cdot h^{-1} \in K$. Further, using the fact $R(K) \subseteq L$ and condition \eqref{I2}, we obtain $h \cdot \phi_{R(h) R(k)R(h)^{-1}}(h^{-1}) \in K$. This shows that $K$ is an ideal of the skew left brace $H_R$.
\end{proof}


\begin{prop}
Let $(K, L, \phi|, R|)$ be  an ideal of $(H, G, \phi, R)$. Then the skew left brace induced by $(H/K, G/L, \overline{\phi}, \overline{R})$  is isomorphic to $ H_R/ K_{R|}$.
\end{prop}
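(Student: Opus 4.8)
The plan is to exhibit an explicit isomorphism of skew left braces between the skew left brace induced by the quotient relative Rota-Baxter group $(H/K, G/L, \overline{\phi}, \overline{R})$ and the quotient skew left brace $H_R/K_{R|}$, and to verify that it respects both the additive and the multiplicative operations. First I would note that by Proposition \ref{IRRB}, $K_{R|}$ is an ideal of the skew left brace $H_R$, so the quotient $H_R/K_{R|}$ makes sense as a skew left brace: its additive group is $H^{(\cdot)}/K^{(\cdot)}$ and its multiplicative group is $H^{(\circ_R)}/K^{(\circ_R)}$. On the other side, the skew left brace induced by the quotient relative Rota-Baxter group has underlying set $H/K$, additive operation the quotient of $\cdot$, and multiplicative operation $\overline{h_1} \circ_{\overline{R}} \overline{h_2} = \overline{h_1}\, \overline{\phi}_{\overline{R}(\overline{h_1})}(\overline{h_2})$ by the recipe of Proposition \ref{rrb2sb} applied to $(H/K, G/L, \overline{\phi}, \overline{R})$.

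The natural candidate for the isomorphism is the identity on underlying cosets, $\Theta: \overline{h} \mapsto h K$, since both skew left braces have $H/K$ as their common additive group. So the additive part is automatic: $\Theta$ is by construction the identity on $H^{(\cdot)}/K^{(\cdot)}$. The substantive step is to check that $\Theta$ is also a homomorphism for the multiplicative structures, i.e. that the two multiplicative operations on $H/K$ actually coincide. Concretely, I would compute
\begin{align*}
\overline{h_1} \circ_{\overline{R}} \overline{h_2} &= \overline{h_1}\, \overline{\phi}_{\overline{R}(\overline{h_1})}(\overline{h_2}) = \overline{h_1}\, \overline{\phi}_{\overline{R(h_1)}}(\overline{h_2}) = \overline{h_1\, \phi_{R(h_1)}(h_2)} = \overline{h_1 \circ_R h_2},
\end{align*}
using the defining formulas $\overline{R}(\overline{h}) = \overline{R(h)}$ and $\overline{\phi}_{\overline{g}}(\overline{h}) = \overline{\phi_g(h)}$ from Theorem \ref{subs}. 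This shows that the multiplicative operation of the induced skew left brace on $H/K$ is exactly the operation induced on $H/K$ by $\circ_R$, which is precisely the multiplicative operation of the quotient skew left brace $H_R/K_{R|}$ under the identification $H^{(\circ_R)}/K^{(\circ_R)} \cong H/K$ as sets. Hence $\Theta$ is simultaneously a homomorphism for both operations and is a bijection, so it is an isomorphism of skew left braces.

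I expect the main obstacle to be bookkeeping about well-definedness and the identification of the two descriptions of the quotient's multiplicative group, rather than any deep computation. In particular, one must make sure that the coset $K$ is the same normal subgroup of $H^{(\circ_R)}$ (via $K^{(\circ_R)}$) as it is of the additive group, so that the underlying set $H/K$ is genuinely common to both structures; this is exactly what Proposition \ref{IRRB} guarantees, since it establishes that $K$ is an ideal of $H_R$ and hence $K^{(\circ_R)} \trianglelefteq H^{(\circ_R)}$ with the same underlying subset $K$. Once that identification is in place, the displayed computation does all the real work, and the verification that $\Theta$ respects $\cdot$ is trivial. I would therefore organize the proof as: first invoke Proposition \ref{IRRB} to justify the quotient skew left brace, then define $\Theta$ as the identity on cosets, then verify the multiplicative compatibility by the displayed chain of equalities, and conclude that $\Theta$ is the desired isomorphism.
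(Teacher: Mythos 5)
Your proposal is correct and matches the paper's approach exactly: the paper's proof is the single sentence ``It is apparent that the identity map serves as the necessary isomorphism,'' and your argument simply fills in the details, with your displayed computation being the same one the paper itself carries out in part (3) of Proposition \ref{lem:mod-iso}. Your explicit appeal to Proposition \ref{IRRB} to justify that the quotient skew left brace is well defined is a reasonable piece of bookkeeping that the paper leaves implicit.
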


\begin{proof}
It is apparent that the identity map serves as the necessary isomorphism.
\end{proof}
\medskip

\section{Isoclinism of relative Rota-Baxter groups}
In this section, we introduce the notion of isoclinism of relative Rota-Baxter groups, and relate it with the recently introduced notion of isoclinism of skew left braces \cite{TV23}. The idea stems from the classical work of Hall \cite{MR0003389} where isoclinism of groups was studied. To proceed, we need to introduce some relevant definitions in the context of relative Rota-Baxter groups.
\par
Let $(H, G, \phi, R)$ be a relative Rota-Baxter group. Then, by Proposition \ref{R homo H to G}, $R: (H, \circ_R) \rightarrow G$ is a group homomorphism, and hence $\Ker(\phi ~ R)$ is well-defined. We set
$$\Z^{\phi}_{R}(H):=\Z(H)  \cap \Ker(\phi ~ R) \cap \Fix(\phi),$$
where $\Z(H)$ is the center of the group $H$ and $\Fix(\phi)= \{ x \in H  \mid \phi_g(x)=x \mbox{ for all } g \in G \}$ is the fixed-point subgroup of the action.

\begin{defn}
The center of a relative Rota-Baxter group $(H, G, \phi, R)$ is defined as
	$$\Z(H, G, \phi, R):= \big( \Z^{\phi}_R(H), \Ker(\phi), \phi|, R|).$$
	\end{defn}

Let $(H, \cdot, \circ)$ be a skew left brace, $\Z(H^{(\cdot)})$ denote the centre of the group $H^{(\cdot)}$ and $\Fix(\lambda) = \{x \in H \mid \lambda_a(x) = x \quad \textrm{for all} \quad  a\in H \}$.  Then the annihilator of $(H,\cdot ,\circ)$ is defined as $$\Ann(H):=\Ker(\lambda) \cap \Z(H^{(\cdot)}) \cap \Fix(\lambda)=\{a \in H \mid b \circ a = a \circ b = b \cdot a = a \cdot b \quad \textrm{for all} \quad b \in H\}.$$ Clearly, $\Ann(H)$ is an ideal of  $(H, \cdot, \circ)$.

\begin{prop}\label{lem:mod-iso}
Let $(H, G, \phi, R)$ be a relative Rota-Baxter group. Then the following hold:
\begin{enumerate}
\item  $\Z(H, G, \phi, R)$ is an ideal of $(H, G, \phi, R)$ and the skew left brace induced by  $\Z(H, G, \phi, R)$  is trivial.
\item The skew left brace induced by  $\Z(\I(H, G, \phi, R))$ is the same as $\Ann(H_{R|})$.
\item The skew left brace induced by $\I(H, G, \phi, R)/ \Z(\I(H, G, \phi, R))$ is the same as $H_{R|}/ \Ann(H_{R|})$.
\end{enumerate}
\end{prop}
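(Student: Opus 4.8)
The plan is to handle the three parts in order, deriving (2) and (3) from (1) together with the quotient results already established. The organizing observation, which I would isolate at the very start, is that the two operations of $H_R$ coincide on $\Fix(\phi)$: for $a,b \in \Fix(\phi)$ we have $a \circ_R b = a\, \phi_{R(a)}(b) = a \cdot b$, since $\phi_{R(a)}$ fixes $b$.

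For (1), the first task is to check that $\Z^{\phi}_R(H) = \Z(H) \cap \Ker(\phi\, R) \cap \Fix(\phi)$ is a subgroup of $H^{(\cdot)}$ and that $(\Z^{\phi}_R(H), \Ker(\phi), \phi|, R|)$ satisfies Definition \ref{defn ideal rbb-datum}. The coincidence of operations on $\Fix(\phi)$ lets me convert the $\circ_R$-closure coming from $\Ker(\phi\, R)$ into $\cdot$-closure: since $R : H^{(\circ_R)} \to G$ is a homomorphism (Proposition \ref{R homo H to G}), for $a,b \in \Z^{\phi}_R(H) \subseteq \Fix(\phi)$ one has $R(a\cdot b) = R(a \circ_R b) = R(a)R(b)$, so $\phi_{R(ab)} = \phi_{R(a)}\phi_{R(b)} = \Id$, and the analogous computation handles inverses; combined with the fact that $\Z(H)$ and $\Fix(\phi)$ are honest subgroups, this gives $\Z^{\phi}_R(H) \le H^{(\cdot)}$. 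Normality and the ideal conditions are then routine: $\Z^{\phi}_R(H) \subseteq \Z(H)$ forces normality in $H$ and $\Ker(\phi) \trianglelefteq G$ is a kernel, giving \eqref{I0}; $\Z^{\phi}_R(H) \subseteq \Fix(\phi)$ makes every $\phi_g$ act trivially, giving \eqref{I1}; and $\ell \in \Ker(\phi)$ means $\phi_\ell = \Id$, so $\phi_\ell(h)h^{-1} = 1$, giving \eqref{I2}. The residual subgroup requirements ($\phi_\ell(\Z^{\phi}_R(H)) \subseteq \Z^{\phi}_R(H)$ and $R(\Z^{\phi}_R(H)) \subseteq \Ker(\phi)$) follow from $\phi_\ell = \Id$ and from the implication $h \in \Ker(\phi\, R) \Rightarrow R(h) \in \Ker(\phi)$. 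Finally, the induced brace on $\Z^{\phi}_R(H)$ is trivial precisely because $\cdot$ and $\circ_R$ agree there.

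For (2), I would compute the center of the image datum $\I(H, G, \phi, R) = (H, \im(R), \phi|, R|)$, namely $\Z^{\phi|}_{R|}(H) = \Z(H) \cap \Ker(\phi|\, R|) \cap \Fix(\phi|)$. Restricting the codomain of $R$ to $\im(R)$ leaves $\Ker(\phi\, R)$ unchanged, but it replaces the fixed-point set by $\Fix(\phi|) = \{x : \phi_g(x) = x \text{ for all } g \in \im(R)\}$, which is exactly $\Fix(\lambda)$ for the $\lambda$-map $\lambda_h = \phi_{R(h)}$ of $H_R$. Together with $\Ker(\lambda) = \Ker(\phi\, R)$ and $\Z(H^{(\cdot)}) = \Z(H)$, this identifies $\Z^{\phi|}_{R|}(H)$ with the set description of $\Ann(H_{R|})$ recorded after the definition of the annihilator; since both carry the trivial brace structure by (1), and $H_{R|} = H_R$ by Proposition \ref{image same skew brace}, they coincide as skew left braces.

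Part (3) is then immediate: I apply the quotient proposition (asserting that the brace induced by a quotient datum is the quotient of the induced braces) to the datum $\I(H, G, \phi, R)$ and its ideal $\Z(\I(H, G, \phi, R))$, and substitute the identifications from Proposition \ref{image same skew brace} and part (2) to obtain $H_{R|}/\Ann(H_{R|})$. I expect the main obstacle to be the bookkeeping in (2): one must see that the passage to $\I$ is exactly what swaps $\Fix(\phi)$ for $\Fix(\lambda)$, so that it is the center of the \emph{image} datum, not of the original datum, that matches the annihilator. The other step needing genuine care is the verification in (1) that the intersection is closed under $\cdot$, which rests on the coincidence of $\cdot$ and $\circ_R$ on $\Fix(\phi)$.
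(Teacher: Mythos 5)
Your proof is correct and follows essentially the same route as the paper: the same verification of the ideal conditions and of triviality via $x \circ_R y = x\cdot y$ on the center in (1), and the same identifications $\Ker(\lambda)=\Ker(\phi\, R)$, $\Fix(\lambda)=\Fix(\phi|)$, $\Z(H^{(\cdot)})=\Z(H)$ in (2). The only differences are cosmetic: you supply the subgroup-closure argument (via the coincidence of $\cdot$ and $\circ_R$ on $\Fix(\phi)$) that the paper dismisses as clear, and you obtain (3) by citing the quotient proposition of Section 5 rather than the paper's direct one-line comparison of the two multiplications.
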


\begin{proof}
We prove each assertion individually.

\begin{enumerate}
\item It is clear that $\Z(H, G, \phi, R)$ is a relative Rota-Baxter subgroup of  $(H, G, \phi, R)$. To show that it is an ideal of $(H, G, \phi, R)$, we verify conditions \eqref{I0}, \eqref{I1} and \eqref{I2}. Clearly,  $\Z^{\phi}_R(H)  \trianglelefteq H$ and $\Ker(\phi) \trianglelefteq G$. If $g \in G$ and $x \in \Z^{\phi}_{R}(H)$, then $\phi_g(x)=x$, and hence $\phi_g (\Z^{\phi}_{R}(H))=\Z^{\phi}_{R}(H)$. If $\ell \in \Ker(\phi)$ and $h \in H$, then $\phi_\ell(h)h^{-1}=1$. Finally, the skew left brace induced by  $\Z(H, G, \phi, R)$  is trivial since $x \circ_{R|} y = x \cdot y$ for all $x, y \in \Z^{\phi}_{R}(H)$. This established assertion (1).

\item It suffices to prove that $\Z^{\phi|}_{R|}(H) = \Ann(H_{R|})$ as sets. Recall that $\Ann(H_{R|})= \Z(H^{(\cdot)}) \cap \Ker(\lambda) \cap \Fix (\lambda)$, where $\lambda: H^{(\circ_R)} \to \Aut(H^{(\cdot)})$ is the $\lambda$-map associated to the skew left brace $H_{R|}$. But, we have $\lambda_a (b)= a^{-1} (a \circ_R b)= \phi_{R(a)}( b)$ for all $a, b \in H$, and hence $\Ker(\lambda)= \Ker(\phi~R)$. Similarly, 
\begin{eqnarray*}
\Fix(\lambda) &=& \{x \in H \mid \lambda_a(x)=x \quad \textrm{for all} \quad a \in H \}\\
&= &\{x \in H \mid  \phi_{R(a)}(x)=x \quad \textrm{for all} \quad a \in H \}\\
&= &\{x \in H \mid  \phi_{y}(x)=x \quad \textrm{for all} \quad y \in \im(R) \}\\
&= &\Fix(\phi|), 
\end{eqnarray*}
which proves our assertion.

\item Let $(H/\Z^{\phi|}_{R|}(H) )_{\overline{R}}$ be the skew left brace induced by  $\I(H, G, \phi, R)/ \Z(\I(H, G, \phi, R))$. It follows from assertion (2) that the additive groups of the skew left braces $(H/\Z^{\phi|}_{R|}(H) )_{\overline{R}}$ and $H_{R|}/ \Ann(H_{R|})$ are the same. Let $\circ_{\overline{R}}$ and $\overline{\circ_R}$ denote the multiplicative group operations in $(H/\Z^{\phi|}_{R|}(H) )_{\overline{R}}$ and $H_{R|}/ \Ann(H_{R|})$, respectively. Then, for $\overline{h}_1, \overline{h}_2 \in H/\Z^{\phi|}_{R|}(H)$, we have 
$$\overline{h}_1 \circ_{\overline{R}} \overline{h}_2=\overline{h}_1 \overline{\phi}_{\overline{R}(\overline{h}_1)} (\overline{h}_2 )= \overline{h_1 \phi_{R(h_1)} (h_2 )}= \overline{h_1 \circ_R h_2}= \overline{h}_1 \overline{\circ_R}~ \overline{h}_2.$$
This established the assertion.
\end{enumerate}
\end{proof}

\begin{no}
For a relative Rota-Baxter group $(H, G, \phi, R)$, let $H^{(2)}$ denote the subgroup of $H$ generated by set  $\{\phi_{g}(h)h^{-1} \mid  h \in H  \mbox{ and }  g \in G \}$. 
\end{no}
 With this setting, we have
\begin{prop}
$(H^{(2)}, G, \phi|, R|)$ is an ideal of $(H, G, \phi, R)$.
\end{prop}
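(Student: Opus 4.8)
The plan is to verify directly that $(H^{(2)}, G, \phi|, R|)$ satisfies the three defining conditions of an ideal in Definition \ref{defn ideal rbb-datum}, after first checking that it is a relative Rota-Baxter subgroup. Since the second component is all of $G$, the condition $R(H^{(2)}) \subseteq G$ needed to form the restriction $R|$ is automatic, and $G \trianglelefteq G$ is trivial; likewise condition \eqref{I2} is immediate, because every element $\phi_\ell(h)h^{-1}$ with $\ell \in G$ and $h \in H$ is by definition one of the generators of $H^{(2)}$. So the real content lies in condition \eqref{I1} ($\phi$-invariance of $H^{(2)}$) and the normality part of \eqref{I0}.

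First I would establish \eqref{I1}, which also supplies the subgroup requirement $\phi_g(H^{(2)}) \subseteq H^{(2)}$. Since each $\phi_g$ is an automorphism of $H$, it suffices to check that it sends every generator into $H^{(2)}$. For a generator $\phi_{g'}(h)h^{-1}$ one computes
$$\phi_g\big(\phi_{g'}(h)h^{-1}\big) = \phi_{gg'}(h)\,\phi_g(h)^{-1} = \big(\phi_{gg'}(h)h^{-1}\big)\big(\phi_g(h)h^{-1}\big)^{-1},$$
and both factors on the right are generators of $H^{(2)}$. Applying the same with $g^{-1}$ in place of $g$ shows $\phi_g(H^{(2)}) = H^{(2)}$, so each $\phi_g$ restricts to an automorphism of $H^{(2)}$ and $(H^{(2)}, G, \phi|, R|)$ is a genuine relative Rota-Baxter subgroup.

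The main obstacle is the normality of $H^{(2)}$ in $H$ required by \eqref{I0}, since the generators are not visibly stable under conjugation by arbitrary elements of $H$. The key is the identity, valid for all $x, h \in H$ and $g \in G$,
$$x\big(\phi_g(h)h^{-1}\big)x^{-1} = \big(\phi_g(x)x^{-1}\big)^{-1}\big(\phi_g(xh)(xh)^{-1}\big),$$
which I would verify by expanding the right-hand side using the homomorphism property $\phi_g(xh) = \phi_g(x)\phi_g(h)$ and cancelling the middle factor $\phi_g(x)^{-1}\phi_g(x)$. Since $\phi_g(x)x^{-1}$ and $\phi_g(xh)(xh)^{-1}$ are both generators of $H^{(2)}$, the conjugate of an arbitrary generator lies in $H^{(2)}$; as conjugation by $x$ is an automorphism of $H$, it follows that $x H^{(2)} x^{-1} \subseteq H^{(2)}$ for every $x \in H$, which gives \eqref{I0}. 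Combined with \eqref{I1} and \eqref{I2}, this completes the verification that $(H^{(2)}, G, \phi|, R|)$ is an ideal of $(H, G, \phi, R)$.
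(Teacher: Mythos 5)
Your proof is correct and follows essentially the same route as the paper's: both arguments reduce everything to the generators of $H^{(2)}$, verify $\phi$-invariance and normality in $H$ by direct computation, and dismiss the remaining conditions as trivial. The only differences are cosmetic — the paper rewrites $\phi_g\big(\phi_{g_1}(h)h^{-1}\big)$ as a single generator $\phi_{gg_1g^{-1}}(\phi_g(h))\,\phi_g(h)^{-1}$ and proves normality by substituting $x=\phi_g(h_1)$, whereas your conjugation identity $x\big(\phi_g(h)h^{-1}\big)x^{-1}=\big(\phi_g(x)x^{-1}\big)^{-1}\big(\phi_g(xh)(xh)^{-1}\big)$ reaches the same conclusion slightly more directly.
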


\begin{proof}
Let $g \in G$ and $x = \phi_{g_1}(h)h^{-1} \in H^{(2)}$, where $h \in H$ and $g_1 \in G$. Then
$$ \phi_g(x)= \phi_{g g_1}(h) \phi_{g}(h^{-1})=  \phi_{g g_1 g^{-1}}(\phi_{g}(h)) \phi_{g}(h^{-1})= \phi_{g_2}(h_1) h^{-1}_1,$$
where $g_2=g g_1 g^{-1}$ and $h_1=\phi_{g}(h)$. Thus, $H^{(2)}$ is invariant under the action $\phi$.
\par
We  now establish normality of $H^{(2)}$ in $H$. Let $h, x \in H$ and $g \in G$. Since $\phi_g$ is an automorphism of $H$, there exists $h_1 \in H$ such that $\phi_{g}(h_1)=x$. Thus, we have
\begin{align*}
	x^{-1} \phi_{g}(h)h^{-1} x=& \; \phi_{g}(h_1^{-1}) \phi_{g}(h)h^{-1} \phi_{g}(h_1)\notag\\
	=& \; \big(\phi_{g }(h_1^{-1} h) \big)  \big(h^{-1} h_1\big) \big(h^{-1}_1 \phi_{g}(h_1)\big)\notag\\
	=& \; \big(\phi_{g }(h_1^{-1} h) \; (h^{-1}_1 h)^{-1}\big)\;  \big(h^{-1}_1 \phi_{g}(h_1)\big) \in H^{(2)},\notag\\
\end{align*}
and hence $H^{(2)} \trianglelefteq H$. The conditions \eqref{I1} and \eqref{I2} holds trivially, and hence $(H^{(2)}, G, \phi|, R|)$ is an ideal of $(H, G, \phi, R)$.
\end{proof}

\begin{no}
Given a relative Rota-Baxter group $(H, G, \phi, R)$, let $(H, G, \phi, R)^{(2)}$ denote the relative Rota-Baxter subgroup  $(H^{(2)}, G, \phi|, R|)$. Further, if $(H, \cdot, \circ)$ is a skew left brace, we use $H^{(2)}$ to denote the ideal of $(H, \cdot, \circ)$ generated by $a^{-1} \cdot (a \circ b) \cdot b^{-1}$ for $a, b \in H$.
\end{no}
Continuing with this setting, we have the following result.

\begin{prop}
Let $(H, G, \phi, R)$ be a relative Rota-Baxter group. Then the following hold:
\begin{enumerate}
\item Let $K$ be any normal subgroup of $H$ containing $H^{(2)}$. Then for every ideal of the form $(K, L,  \phi|, R|)$, the relative Rota-Baxter group $(H, G, \phi, R)/ (K, L,  \phi|, R|)$ is trivial.
 \item The skew left brace induced by $\big(\I(H, G, \phi, R)\big)^{(2)}$ is isomorphic to $H^{(2)}_{R}$.
\end{enumerate} 
\end{prop}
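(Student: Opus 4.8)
The plan is to treat the two assertions separately. The first is essentially immediate from the definition of the quotient, while the second requires identifying a group-theoretic subgroup of $H$ with a skew-brace ideal and then observing that the induced skew-brace structures literally coincide.

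For assertion (1), recall from Theorem \ref{subs} that the quotient $(H, G, \phi, R)/(K, L, \phi|, R|)$ is the quadruple $(H/K, G/L, \overline{\phi}, \overline{R})$, and that triviality amounts to $\overline{\phi}: G/L \to \Aut(H/K)$ being the trivial homomorphism. I would verify this directly: for $\overline{g} \in G/L$ and $\overline{h} \in H/K$ we have $\overline{\phi}_{\overline{g}}(\overline{h}) = \overline{\phi_g(h)}$, and since $\phi_g(h)h^{-1}$ is one of the defining generators of $H^{(2)} \subseteq K$, normality of $K$ gives $\phi_g(h)K = hK$, whence $\overline{\phi}_{\overline{g}}(\overline h) = \overline h$ for all $\overline g, \overline h$. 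Thus $\overline{\phi}_{\overline{g}} = \Id$ for every $\overline g$, so $\overline{\phi}$ is trivial. There is no obstacle here; the work is pure coset bookkeeping.

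For assertion (2), I would first unravel the notation. By definition, $\big(\I(H, G, \phi, R)\big)^{(2)}$ is the relative Rota-Baxter subgroup $(H^{(2)}, \im(R), \phi|, R|)$ of $\I(H, G, \phi, R) = (H, \im(R), \phi|, R|)$, where now $H^{(2)}$ denotes the $\cdot$-subgroup of $H$ generated by $\{\phi_y(h)h^{-1} \mid y \in \im(R),\ h \in H\}$. Its induced skew left brace has additive group $(H^{(2)}, \cdot)$ and multiplicative operation the restriction of $\circ_R$. On the other hand, the skew-brace ideal $H^{(2)}$ of $H_R$ is generated by the elements $a^{-1} \cdot (a \circ_R b) \cdot b^{-1} = \phi_{R(a)}(b)b^{-1}$. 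Since $\im(R) = R(H)$ by the remark following Proposition \ref{R homo H to G}, these two generating sets coincide, which is precisely why the statement passes to $\I$ rather than working with all of $G$.

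The crux is to show that the $\cdot$-subgroup $H^{(2)}$ generated over $\im(R)$ equals the skew-brace ideal $H^{(2)}$ of $H_R$ as subsets of $H$. One containment is clear, since the skew-brace ideal is in particular a $\cdot$-subgroup containing the common generators. For the reverse, I would apply the preceding proposition to the relative Rota-Baxter group $\I(H, G, \phi, R)$, i.e. with $G$ replaced by $\im(R)$, to conclude that $(H^{(2)}, \im(R), \phi|, R|)$ is an ideal of $\I(H, G, \phi, R)$; then Proposition \ref{IRRB} shows that the skew brace it induces is an ideal of the skew brace induced by $\I(H, G, \phi, R)$, which is exactly $H_R$ by Proposition \ref{image same skew brace}. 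Being an ideal that contains the generators of the skew-brace ideal $H^{(2)}$, it must contain $H^{(2)}$, giving the reverse containment. Consequently the two skew left braces share the same underlying set, the same additive operation, and the same restriction of $\circ_R$, so the identity map is the desired isomorphism. I expect this identification --- reconciling the $\cdot$-subgroup definition with the skew-brace ideal definition, and seeing that restricting the action to $\im(R)$ is exactly what aligns the generating sets --- to be the main obstacle.
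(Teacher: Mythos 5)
Your proposal is correct. For assertion (1) your argument is exactly the paper's: since $\phi_g(h)h^{-1} \in H^{(2)} \subseteq K$, one has $\overline{\phi}_{\overline{g}}(\overline{h}) = \overline{\phi_g(h)h^{-1}\, h} = \overline{h}$, so $\overline{\phi}$ is the trivial homomorphism and the quotient is a trivial relative Rota-Baxter group.

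The difference lies in assertion (2), where the paper offers no argument at all (``The second assertion follows immediately from the definitions''), while you identify and close the one point that genuinely needs checking: the symbol $H^{(2)}$ carries two different meanings. For the relative Rota-Baxter group $\I(H,G,\phi,R)$ it is the \emph{subgroup} of $H^{(\cdot)}$ generated by $\{\phi_y(h)h^{-1} \mid y \in \im(R),\ h \in H\}$, whereas for the skew left brace $H_R$ it is the \emph{ideal} generated by the elements $a^{-1}\cdot(a\circ_R b)\cdot b^{-1} = \phi_{R(a)}(b)\cdot b^{-1}$, and an ideal generated by a set is in general larger than the subgroup that set generates. Your reconciliation is the right one: the two generating sets coincide precisely because $G$ has been replaced by $\im(R)$ (which, as you note, is why the statement is phrased with $\I(H,G,\phi,R)$ rather than $(H,G,\phi,R)$); the subgroup sits inside the ideal trivially; and conversely, applying the proposition immediately preceding the statement (that $(H^{(2)}, G, \phi|, R|)$ is an ideal of $(H,G,\phi,R)$) to $\I(H,G,\phi,R)$ shows that the subgroup is the first entry of an ideal of that relative Rota-Baxter group, so by Proposition \ref{IRRB} it is an ideal of the skew left brace induced by $\I(H,G,\phi,R)$, which equals $H_R$ by Proposition \ref{image same skew brace}; minimality of the generated ideal then gives the reverse inclusion. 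With the underlying sets equal, both skew left braces carry the restrictions of $\cdot$ and $\circ_R$, so the identity map is the claimed isomorphism. In short, where the paper asserts immediacy, you supply the actual verification; the extra length buys a proof of the ideal-versus-subgroup identification that the paper leaves implicit.
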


\begin{proof}
Recall that, the induced action $\overline{\phi}: G/L: \Aut(H/K)$ is given by  $\overline{\phi}_{\overline{g}}(\overline{h})=\overline{\phi_g(h)}$ for $\overline{g} \in G/L$ and $\overline{h} \in H/K$. We see that
\begin{align*}
\overline{\phi}_{\overline{g}}(\overline{h})=\overline{\phi_g(h)}=\overline{\phi_g(h)h^{-1} h}=\overline{h}, 
\end{align*}
and hence the relative Rota-Baxter group is trivial, proving assertion (1). The second assertion follows immediately from the definitions.
\end{proof}

\begin{defn}
The commutator of a relative Rota-Baxter group $(H, G, \phi, R)$ is defined to be the relative Rota-Baxter group $(H^{\phi}, G, \phi|, R|)$, where $H^{\phi}$ is the subgroup of $H$ generated by its commutator subgroup $[H,H]$ and $H^{(2)}$. We denote the commutator by $(H, G, \phi, R)^{\prime}$.
\end{defn}

Given a skew left brace $(H, \cdot, \circ)$, the commutator $H^\prime$ of $(H, \cdot, \circ)$ is the subgroup of $H^{(\cdot)}$ generated by the commutator subgroup of $H^{(\cdot)}$ and $H^{(2)}$. The commutator $H^\prime$ turns out to be an ideal of $(H, \cdot, \circ)$. The following observations are immediate.

\begin{prop}\label{prop:induced-slb}
Let $(H, G, \phi, R)$ be a relative Rota-Baxter group. Then the following hold:
\begin{enumerate}
\item The commutator $(H, G, \phi, R )^{\prime}$ is an ideal of  $(H, G, \phi, R)$.
\item The skew left brace induced by $(\I (H, G, \phi, R ))^{\prime}$ is isomorphic to $H^{\prime}_{R|}$.
\end{enumerate}
\end{prop}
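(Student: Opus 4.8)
\textbf{Proof proposal for Proposition \ref{prop:induced-slb}.}

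The plan is to handle the two assertions separately, leaning on the machinery already assembled. For assertion (1), the claim is that the commutator $(H, G, \phi, R)^{\prime} = (H^{\phi}, G, \phi|, R|)$ is an ideal of $(H, G, \phi, R)$, where $H^\phi$ is generated by $[H,H]$ and $H^{(2)}$. I would verify the three conditions \eqref{I0}, \eqref{I1}, \eqref{I2} from Definition \ref{defn ideal rbb-datum}. Since the second component is all of $G$, the normality $G \trianglelefteq G$ is automatic and \eqref{I2} reads $\phi_\ell(h)h^{-1} \in H^\phi$ for all $\ell \in G$; but $\phi_\ell(h)h^{-1}$ is by definition a generator of $H^{(2)} \subseteq H^\phi$, so \eqref{I2} holds trivially. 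The substance is therefore to show $H^\phi \trianglelefteq H$ and that $H^\phi$ is $\phi$-invariant (condition \eqref{I1}). Normality follows because $[H,H]$ is already characteristic in $H$ and $H^{(2)}$ was shown to be normal in the previous proposition, so their join $H^\phi$ is normal. For $\phi$-invariance, $\phi_g$ is an automorphism of $H$, so it sends $[H,H]$ into $[H,H]$, and the previous proposition already established $\phi_g(H^{(2)}) \subseteq H^{(2)}$; hence $\phi_g(H^\phi) \subseteq H^\phi$. Finally one must confirm $R(H^\phi) \subseteq G$, which is vacuous, so $(H^\phi, G, \phi|, R|)$ is a genuine relative Rota-Baxter subgroup and in fact an ideal.

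For assertion (2), the goal is to identify the skew left brace induced by $(\I(H, G, \phi, R))^{\prime}$ with the ideal $H^{\prime}_{R|}$ of the induced skew left brace. The key conceptual point is that the commutator of the skew left brace $H_R$ is generated by the commutator subgroup of its additive group $H^{(\cdot)}$ together with the elements $a^{-1} \cdot (a \circ_R b) \cdot b^{-1}$. I would compute that generating set explicitly: since $a \circ_R b = a \cdot \phi_{R(a)}(b)$, we get $a^{-1} \cdot (a \circ_R b) \cdot b^{-1} = \phi_{R(a)}(b) \cdot b^{-1}$, which ranges over a generating set of $(\im R)$-commutators of the form $\phi_{R(a)}(b)b^{-1}$. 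By Proposition \ref{image same skew brace}, passing to $\I(H, G, \phi, R)$ does not change the induced skew left brace, and there the second component is exactly $\im(R)$, so the operator $R|$ realizes the action through $\im(R)$. The upshot is that the additive subgroup generating the skew-brace commutator $H^{\prime}_{R|}$ coincides with $H^\phi$ computed relative to $\im(R)$, which is precisely the first component of $(\I(H, G, \phi, R))^{\prime}$. Invoking Proposition \ref{image same skew brace} once more to see the induced brace structures agree on this common underlying group completes the identification.

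The main obstacle I anticipate is assertion (2), specifically matching the two a priori different generating sets: the skew-brace commutator is generated by additive commutators of $H^{(\cdot)}$ together with $\{\phi_{R(a)}(b)b^{-1}\}$, whereas $H^\phi$ is generated by $[H,H]$ together with $\{\phi_g(h)h^{-1} : g \in G\}$ where $g$ ranges over \emph{all} of $G$. The resolution hinges on first applying Proposition \ref{image same skew brace} to replace $G$ by $\im(R)$, so that the relevant $\phi$-commutators are exactly those with $g \in \im(R)$, matching $\phi_{R(a)}(b)b^{-1}$. Care is needed to confirm that restricting $g$ to $\im(R)$ yields the same subgroup in the $\I$-version, but this is exactly why the statement is phrased in terms of $\I(H, G, \phi, R)$ rather than $(H, G, \phi, R)$ directly; everything else is a routine unwinding of definitions, which is why the authors may simply remark that it follows immediately.
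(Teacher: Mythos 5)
Correct, and essentially the paper's (implicit) approach: the paper gives no argument at all, declaring both assertions immediate from the definitions and the preceding propositions, which is precisely what your unwinding supplies — normality and $\phi$-invariance of $H^{\phi}$ via the previous proposition for assertion (1), and the matching of generating sets after replacing $G$ by $\im(R)$ for assertion (2). One step in (2) deserves to be made explicit: the paper's brace-theoretic $H^{(2)}$ is defined as the \emph{ideal} of $H_R$ generated by the elements $a^{-1}\cdot(a\circ_R b)\cdot b^{-1}=\phi_{R(a)}(b)\cdot b^{-1}$, not a priori the subgroup they generate (as you treat it); the two coincide because that subgroup is the first component of $(\I(H,G,\phi,R))^{(2)}$, hence an ideal of $\I(H,G,\phi,R)$ by the earlier proposition, and hence, by Proposition \ref{IRRB} together with Proposition \ref{image same skew brace}, an ideal of $H_{R|}=H_R$ containing those generators.
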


\begin{lemma}
Let $(H, G, \phi, R)$ be a relative Rota-Baxter group. Then there are maps $\omega_H, \omega^{\phi}_H: \big( H/ \Z^{\phi}_{R}(H)\big) \times \big(H/ \Z^{\phi}_{R}(H)\big)  \rightarrow H^\phi$ defined as
$$\omega_{H}(\overline{h}_1, \overline{h}_2)=[h_1, h_2]$$
and
$$\omega^{\phi}_{H}(\overline{h}_1, \overline{h}_2)= \phi_{R(h_1)}(h_2) h_2^{-1}.$$
\end{lemma}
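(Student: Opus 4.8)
The statement asserts that two functions defined on a quotient are well-defined, so the entire content is to check (i) that the prescribed values actually land in $H^{\phi}$ and (ii) that they do not depend on the choice of coset representatives. Write $Z := \Z^{\phi}_{R}(H) = \Z(H) \cap \Ker(\phi\, R) \cap \Fix(\phi)$. By Proposition \ref{lem:mod-iso}(1) we have $Z \trianglelefteq H$; in fact $Z \subseteq \Z(H)$, so $Z$ is central in the additive group $H^{(\cdot)}$, and the quotient $H/Z$ appearing in the statement is the additive quotient.

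The plan is first to dispose of (i). For $\omega_H$ the value $[h_1, h_2]$ lies in $[H,H] \subseteq H^{\phi}$ by the very definition of $H^{\phi}$. For $\omega^{\phi}_H$ the value $\phi_{R(h_1)}(h_2)\, h_2^{-1}$ has the shape $\phi_g(h)\, h^{-1}$ with $g = R(h_1) \in G$ and $h = h_2$, hence lies in $H^{(2)} \subseteq H^{\phi}$. Thus both maps take values in $H^{\phi}$, and the codomain is justified.

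Next I would establish (ii) for $\omega_H$. Taking representatives $h_i' = h_i z_i$ with $z_i \in Z$, centrality of $z_1, z_2$ in $H^{(\cdot)}$ gives, by the standard fact that commutators depend only on cosets modulo the centre, the identity $[h_1 z_1,\, h_2 z_2] = [h_1, h_2]$; so $\omega_H$ is well-defined.

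The crucial step is (ii) for $\omega^{\phi}_H$, and here the subtlety—which I expect to be the only real obstacle—is that the quotient is formed in $H^{(\cdot)}$ whereas $R$ is a homomorphism only with respect to $\circ_R$ (Proposition \ref{R homo H to G}); all three defining conditions of $Z$ must be used, together with the compatibility of the two products on $Z$. The bridge is the observation that for $z \in Z \subseteq \Fix(\phi)$ one has $h \circ_R z = h\, \phi_{R(h)}(z) = h \cdot z$, so additive translation by elements of $Z$ coincides with $\circ_R$-translation. Consequently, writing $h_1' = h_1 z_1 = h_1 \circ_R z_1$ and invoking that $R$ is a $\circ_R$-homomorphism yields $R(h_1') = R(h_1)\, R(z_1)$, whence $z_1 \in \Ker(\phi\, R)$ forces $\phi_{R(h_1')} = \phi_{R(h_1)} \phi_{R(z_1)} = \phi_{R(h_1)}$. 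Finally, with $h_2' = h_2 z_2$ and using $z_2 \in \Fix(\phi)$ (so $\phi_{R(h_1)}(z_2) = z_2$), a short computation gives $\phi_{R(h_1')}(h_2')\, (h_2')^{-1} = \phi_{R(h_1)}(h_2)\, \phi_{R(h_1)}(z_2)\, z_2^{-1} h_2^{-1} = \phi_{R(h_1)}(h_2)\, h_2^{-1}$, proving that $\omega^{\phi}_H$ is well-defined. Everything beyond this compatibility argument is purely formal.
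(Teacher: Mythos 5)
Your proof is correct and takes essentially the same approach as the paper: the key step in both arguments is to rewrite $h_1 z_1 = h_1 \circ_R z_1$ (possible because $z_1 \in \Fix(\phi)$), invoke that $R$ is a homomorphism on $(H, \circ_R)$ to get $R(h_1 z_1) = R(h_1)R(z_1)$, and then use $z_1 \in \Ker(\phi \, R)$ and $z_2 \in \Fix(\phi)$ to eliminate the perturbations. The only cosmetic differences are that you additionally verify that the values land in $H^{\phi}$ and note that centrality of $z_2$ is not actually needed for $\omega^{\phi}_H$ (cancellation suffices), whereas the paper cites $z_2 \in \Z(H)$ at that step.
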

\begin{proof}
It is easy to see that $\omega_H$ is well-defined. To prove the well-definedness of $\omega^{\phi}_{H}$, let $\overline{h}_1=\overline{h}_3$ and $\overline{h}_2=\overline{h}_4$ in $H/ \Z^{\phi}_{R}(H)$. Then, there exist $z_1, z_2 \in \Z^{\phi}_{R}(H)$ such that $h_1=h_3z_1$ and $h_2=h_4z_2$. This gives
\begin{align}\label{wd1}
	\omega^{\phi}_{H}(\overline{h}_1, \overline{h}_2)=  \phi_{R(h_1)}(h_2) h_2^{-1} =	\phi_{R(h_3 z_1)}(h_4 z_2) (h_4 z_2)^{-1}.
\end{align}
By definition of $\Z^{\phi}_{R}(H)$, we  have $ \phi_{g}(z_1)=z_1$ for all $g \in G$, and hence  $$R(h_3 z_1)=R(h_3 \phi_{R(h_3)}(z_1))=R(h_3) R(z_1).$$
Using the value of $R(h_3 z_1)$ in \eqref{wd1}, we obtain
\begin{align*}
	\omega^{\phi}_{H}(\overline{h}_1, \overline{h}_2)=&\; \phi_{R(h_3) R(z_1)} (h_4 z_2) (h_4 z_2)^{-1}\\
	= &\; \phi_{R(h_3 )}(h_4) z_2 (h_4 z_2)^{-1}  \quad (\text{since}~  z_1 \in \Ker(\phi ~ R)~ \text{and} ~   z_2 \in \Fix(\phi)) \\
	= & \;	\phi_{R(h_3 )}(h_4) h_4^{-1}  \quad (\text{since} ~ z_2 \in \Z(H) )\\
	= & \; \omega^{\phi}_{H}(\overline{h}_3, \overline{h}_4).
\end{align*}
This shows that $\omega^{\phi}_{H}$ is well-defined.
\end{proof}

\begin{defn}
Two relative Rota-Baxter groups  $(H, G, \phi, R)$ and $(K, L, \varphi, S)$ are isoclinic  if there are isomorphisms of relative Rota-Baxter groups 
	$$(\psi_1, \eta_1): (H, G, \phi, R) / \Z(H, G,  \phi, R ) \rightarrow (K, L, \varphi, S) / \Z(K, L, \varphi, S)$$
	and 
	$$(\psi_2, \eta_2): (H, G, \phi, R)^\prime \rightarrow (K, L, \varphi, S)^\prime$$
	such that the following diagram commutes
\begin{align}\label{cd}
\begin{CD}
		H^{\phi}  @<{\omega_H}<<(H/ \Z^{\phi}_{R}(H)) \times (H/ \Z^{\phi}_{R}(H))@>{\omega^{\phi}_H}>> H^{\phi} \\
		@V{\psi_2}VV @V{\psi_1 \times \psi_1}VV @V{\psi_2}VV\\
		K^{\varphi} @<{\omega_K}<< (K/ \Z^{\varphi}_{S}(K)) \times (K/ \Z^{\varphi}_{S}(K))@>{\omega^{\varphi}_K}>> K^{\varphi}.
	\end{CD}
\end{align}
\end{defn}

When the actions $\phi:G \to \Aut(H)$ and $\varphi:L \to \Aut(K)$ are trivial, then the preceding definition boils down to the usual definition of isoclinism of groups $H$ and $K$.

\begin{prop}
An isoclinism of relative Rota-Baxter groups  $(H, G, \phi, R)$ and $(K, L, \varphi, S)$ induces an isoclinism of groups $H$ and $K$.

\end{prop}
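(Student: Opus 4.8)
The plan is to distill, from the relative Rota-Baxter isoclinism, the two isomorphisms required by the classical definition of isoclinism of the groups $H$ and $K$: an isomorphism $\overline{\psi}: H/\Z(H)\to K/\Z(K)$ and an isomorphism $\theta: [H,H]\to[K,K]$ for which the commutator square commutes. The available data are the $H$-component $\psi_1: H/\Z^{\phi}_R(H)\to K/\Z^{\varphi}_S(K)$ of the isomorphism on central quotients and the $H$-component $\psi_2: H^{\phi}\to K^{\varphi}$ of the isomorphism on commutators, both group isomorphisms, linked by the left-hand square of \eqref{cd}, namely $\psi_2\,\omega_H=\omega_K\,(\psi_1\times\psi_1)$.

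First I would prove that $\psi_1$ restricts to a bijection $\Z(H)/\Z^{\phi}_R(H)\to\Z(K)/\Z^{\varphi}_S(K)$. The crucial point is that, because $\omega_H(\overline{h}_1,\overline{h}_2)=[h_1,h_2]$ and $\Z^{\phi}_R(H)\subseteq\Z(H)$, an element $\overline{h}\in H/\Z^{\phi}_R(H)$ lies in $\Z(H)/\Z^{\phi}_R(H)$ if and only if $\omega_H(\overline{h},\overline{h}')=1$ for every $\overline{h}'$. Applying the injective map $\psi_2$ and invoking the commuting square turns this into the condition $\omega_K(\psi_1(\overline{h}),\psi_1(\overline{h}'))=1$ for all $\overline{h}'$; since $\psi_1$ is surjective, $\psi_1(\overline{h}')$ ranges over all of $K/\Z^{\varphi}_S(K)$, so the condition is precisely $\psi_1(\overline{h})\in\Z(K)/\Z^{\varphi}_S(K)$. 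As $\psi_1$ is bijective, it therefore maps $\Z(H)/\Z^{\phi}_R(H)$ isomorphically onto $\Z(K)/\Z^{\varphi}_S(K)$, and hence descends to a well-defined isomorphism $\overline{\psi}: H/\Z(H)\to K/\Z(K)$ through the natural projections $q_H: H/\Z^{\phi}_R(H)\to H/\Z(H)$ and $q_K: K/\Z^{\varphi}_S(K)\to K/\Z(K)$, characterised by $q_K\,\psi_1=\overline{\psi}\,q_H$.

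Next I would obtain $\theta$ by restricting $\psi_2$ to $[H,H]\subseteq H^{\phi}$. Since $[H,H]$ is generated by the commutators $[h_1,h_2]=\omega_H(\overline{h}_1,\overline{h}_2)$, the commuting square gives $\psi_2([H,H])=\langle\,\omega_K(\psi_1(\overline{h}_1),\psi_1(\overline{h}_2))\,\rangle$, which equals $[K,K]$ by surjectivity of $\psi_1$; being a restriction of the injective $\psi_2$, the map $\theta:=\psi_2|_{[H,H]}$ is an isomorphism onto $[K,K]$. It then remains to check that the commutator square for the groups commutes: for $h_1,h_2\in H$, choosing $k_i$ with $\psi_1(\overline{h}_i)=\overline{k}_i$, we have $\overline{\psi}(h_i\Z(H))=k_i\Z(K)$ and
\[
\theta([h_1,h_2])=\psi_2(\omega_H(\overline{h}_1,\overline{h}_2))=\omega_K(\psi_1(\overline{h}_1),\psi_1(\overline{h}_2))=[k_1,k_2],
\]
which is exactly the commutator of $\overline{\psi}(h_1\Z(H))$ and $\overline{\psi}(h_2\Z(H))$. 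Thus $(\overline{\psi},\theta)$ is an isoclinism of $H$ and $K$.

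I expect the first step to be the main obstacle, since $\Z^{\phi}_R(H)=\Z(H)\cap\Ker(\phi\,R)\cap\Fix(\phi)$ is in general strictly smaller than $\Z(H)$, so the matching of the ordinary centers cannot be read off from the domain and codomain of $\psi_1$ and must instead be forced by the commutator leg of \eqref{cd}. It is worth noting that only the $\omega_H$-part of the diagram enters the argument, while the $\omega^{\phi}_H$-part, which encodes the genuinely relative Rota-Baxter information, is not needed for the induced group isoclinism.
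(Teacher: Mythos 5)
Your proposal is correct and follows essentially the same route as the paper's proof: show that $\psi_1$ carries $\Z(H)/\Z^{\phi}_R(H)$ onto $\Z(K)/\Z^{\varphi}_S(K)$ using the commutator leg of \eqref{cd}, descend to an isomorphism $H/\Z(H)\to K/\Z(K)$ by the third isomorphism theorem, restrict $\psi_2$ to $[H,H]$ to get the isomorphism onto $[K,K]$, and verify the induced commutator square. Your biconditional characterisation of $\Z(H)/\Z^{\phi}_R(H)$ via vanishing of $\omega_H$ is a slightly more explicit packaging of the paper's two-step containment argument, but the underlying idea is identical.
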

\begin{proof}
By definition of isoclinism of relative Rota-Baxter groups, there exist isomorphisms $\psi_1: H/ \Z_R^{\phi}(H) \rightarrow K/ \Z_S^{\varphi}(K)$ and $\psi_2: H^{\phi} \rightarrow K^\varphi$ such that the diagram  \eqref{cd}  commutes. It follows from the commutativity of \eqref{cd} that $\psi_2|_{[H,H]} :[H, H] \rightarrow [K,K]$ and  $\psi_2^{-1}|_{[K,K]} :[K, K] \rightarrow [H,H]$, and hence $\psi_2|_{[H,H]}$ is  an isomorphism. 
\par
Let $h\in \Z(H)$ and $x \in H$. It follows from the commutativity of \eqref{cd} that $\omega_K(\psi_1(\overline{h}), \psi_1(\overline{x}))=\psi_2([h,x])=\psi_2(1)=1$. This implies that $\psi_1(\Z(H)/ \Z^{\phi}_R(H)) \subseteq \Z(K)/ \Z^{\varphi}_S(K)$. Again, commutativity of \eqref{cd} and the fact that $\psi_1$ is an isomorphism implies that $\psi_1(\Z(H)/ \Z^{\phi}_R(H)) = \Z(K)/ \Z^{\varphi}_S(K)$. Now, we have an isomorphism $\psi_1: H/ \Z_R^{\phi}(H) \rightarrow K/ \Z_S^{\varphi}(K)$ such that $\psi_1( \Z(H)/ \Z^{\phi}_R(H) )= \Z(K)/ \Z^{\varphi}_S(K)$. By the third isomorphism theorem, it follow that there is an induced isomorphism $\overline{\phi}_1: H/ \Z(H)$ and $K/ \Z(K)$. Further, the commutativity of the diagram \eqref{cd} implies that the induced diagram
	$$\begin{CD}
		(H/ \Z(H)) \times (H/ \Z(H)) @>\omega_H>> [H,H]\\
@V{\overline{\phi}_1 \times \overline{\phi}_1}VV @V{\psi_2|_{[H, H]}}VV\\
		(K/ \Z(K)) \times (K/ \Z(K)) @>\omega_K>> [K,K]
	\end{CD}$$
also commutes, which is desired.
\end{proof}

\begin{no}
Given a relative Rota-Baxter group $(H,G,\phi,R)$, let $H^{R, \phi}$ denote the subgroup of $H$ generated by the set $\{\phi_{R(h_1)}(h_2) h_2^{-1} \mid h_1, h_2 \in H \}$.
\end{no}
\begin{prop}\label{indiso}
Let $(H,G,\phi,R)$ and $(K,L,\varphi,S)$ be isoclinic relative Rota-Baxter groups. Then the following hold:
\begin{enumerate}
\item $\IM(R)/ (\IM(R) \cap \Ker(\phi)) \cong \IM(S)/( \IM(S) \cap \Ker(\varphi))$.
\item $H^{R, \phi} \cong K^{S, \varphi}$.
\item  $H/ \Z^{\phi|}_{R|}(H) \cong K/ \Z^{\varphi|}_{S|}(K)$.
\item $\I(H,G,\phi,R)$ and $\I(K,L,\varphi,S)$ are isoclinic relative Rota-Baxter groups.
\end{enumerate}

\end{prop}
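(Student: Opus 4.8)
The plan is to establish the four isomorphisms in sequence, leaning on the two isomorphisms $(\psi_1,\eta_1)$ and $(\psi_2,\eta_2)$ and the commutative diagram \eqref{cd} that come packaged in the definition of isoclinism. I would begin with assertion (2), since $H^{R,\phi}$ is precisely the subgroup generated by the image of the map $\omega^\phi_H$, and likewise $K^{S,\varphi}$ is generated by the image of $\omega^\varphi_K$. The rightmost square of \eqref{cd} asserts $\psi_2\,\omega^\phi_H=\omega^\varphi_K\,(\psi_1\times\psi_1)$, so $\psi_2$ carries generators of $H^{R,\phi}$ onto generators of $K^{S,\varphi}$; since $\psi_2:H^\phi\to K^\varphi$ is an isomorphism and $H^{R,\phi}\leq H^\phi$, $K^{S,\varphi}\leq K^\varphi$, restricting $\psi_2$ gives the desired isomorphism in (2). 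The same commutative square read through $\psi_2^{-1}$ guarantees surjectivity onto all of $K^{S,\varphi}$.

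Next I would dispatch (3). The isomorphism $(\psi_1,\eta_1)$ is an isomorphism of the quotient relative Rota-Baxter groups $(H,G,\phi,R)/\Z(H,G,\phi,R)$ and $(K,L,\varphi,S)/\Z(K,L,\varphi,S)$, so in particular $\psi_1:H/\Z^\phi_R(H)\to K/\Z^\varphi_S(K)$ is a group isomorphism. By Proposition \ref{lem:mod-iso}(2) we have $\Z^{\phi|}_{R|}(H)=\Z^\phi_R(H)$ as sets (since $\Fix(\phi|)$ depends only on $\im(R)$, and $\Ker(\phi\,R)$ is unchanged on passing to the restriction), and similarly $\Z^{\varphi|}_{S|}(K)=\Z^\varphi_S(K)$; hence $\psi_1$ already is the required isomorphism $H/\Z^{\phi|}_{R|}(H)\cong K/\Z^{\varphi|}_{S|}(K)$, giving (3).

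For assertion (1), I would analyze $\eta_1:G/\Ker(\phi)\to L/\Ker(\varphi)$, which is the second component of the isomorphism $(\psi_1,\eta_1)$ (recall the center of a relative Rota-Baxter group has $G$-part $\Ker(\phi)$). The morphism conditions \eqref{rbb datum morphism} give $\eta_1\,\overline R=\overline S\,\psi_1$, so $\eta_1$ maps the image of $\overline R$ onto the image of $\overline S$; since $\im(R)\Ker(\phi)/\Ker(\phi)$ is exactly the image of $\overline R$ in $G/\Ker(\phi)$, which is canonically $\IM(R)/(\IM(R)\cap\Ker(\phi))$ by the second isomorphism theorem, restricting the isomorphism $\eta_1$ to this subgroup yields (1). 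The point I expect to require the most care is checking that $\eta_1$ restricts to an isomorphism precisely between these images rather than merely mapping one into the other; surjectivity follows by running the argument symmetrically through $\eta_1^{-1}$ and using $\eta_1^{-1}\,\overline S=\overline R\,\psi_1^{-1}$.

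Finally, assertion (4) should follow by assembling the pieces: $\I(H,G,\phi,R)=(H,\im(R),\phi|,R|)$, and I would exhibit the isoclinism data for $\I(H,G,\phi,R)$ and $\I(K,L,\varphi,S)$ by restricting the maps already produced. The center-quotient isomorphism is supplied by (3) together with (1) (these identify the additive and image parts of the centers of the restricted objects), the commutator isomorphism is supplied by Proposition \ref{prop:induced-slb} combined with the restriction of $\psi_2$ from (2), and the relevant square commutes because the maps $\omega_H,\omega^\phi_H$ for $\I(H,G,\phi,R)$ coincide with those for $(H,G,\phi,R)$ (they are defined using $R$ and its image only). The main obstacle I anticipate throughout is bookkeeping: verifying that the several ``restricted'' centers and commutators computed for $\I(H,G,\phi,R)$ genuinely agree with the ambient ones, so that the commuting diagram \eqref{cd} descends without modification; once these identifications are in hand, the isoclinism of the image relative Rota-Baxter groups is immediate.
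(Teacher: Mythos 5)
Your treatments of assertions (1) and (2) are correct and essentially identical to the paper's: for (1) you embed $\IM(R)/(\IM(R)\cap\Ker(\phi))$ into $G/\Ker(\phi)$ and use $\eta_1\,\overline{R}=\overline{S}\,\psi_1$; for (2) you use the rightmost square of \eqref{cd} to see that $\psi_2$ carries the generators $\phi_{R(h_1)}(h_2)h_2^{-1}$ of $H^{R,\phi}$ onto the generators of $K^{S,\varphi}$.

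However, assertion (3) contains a genuine gap. You claim that $\Z^{\phi|}_{R|}(H)=\Z^{\phi}_{R}(H)$ as sets, citing Proposition \ref{lem:mod-iso}(2); but that proposition says $\Z^{\phi|}_{R|}(H)=\Ann(H_{R|})$, which is a different statement, and the claimed equality is false in general. The point is that $\Z^{\phi}_{R}(H)$ involves $\Fix(\phi)=\{x\in H\mid \phi_g(x)=x \text{ for all } g\in G\}$, whereas $\Z^{\phi|}_{R|}(H)$ involves $\Fix(\phi|)=\{x\in H\mid \phi_g(x)=x \text{ for all } g\in \im(R)\}$; since $\im(R)$ may be a proper subgroup of $G$, one only has the inclusion $\Fix(\phi)\subseteq\Fix(\phi|)$, hence $\Z^{\phi}_{R}(H)\subseteq\Z^{\phi|}_{R|}(H)$, and the inclusion can be strict. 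For a concrete example, take $H$ abelian, $R$ the trivial map $R(h)=1$, and $\phi$ any non-trivial action: then $\Z^{\phi|}_{R|}(H)=H$ while $\Z^{\phi}_{R}(H)=\Fix(\phi)\subsetneq H$. Consequently $\psi_1$ is an isomorphism $H/\Z^{\phi}_{R}(H)\to K/\Z^{\varphi}_{S}(K)$ but is \emph{not} automatically the required isomorphism $H/\Z^{\phi|}_{R|}(H)\to K/\Z^{\varphi|}_{S|}(K)$. The paper's proof of (3) does real work here: it shows, using the morphism identities $\overline{S}\,\psi_1=\eta_1\,\overline{R}$ and $\psi_1\,\overline{\phi}_{\overline{g}}=\overline{\varphi}_{\eta_1(\overline{g})}\,\psi_1$, that $\psi_1$ restricts to an isomorphism $\Z^{\phi|}_{R|}(H)/\Z^{\phi}_{R}(H)\to\Z^{\varphi|}_{S|}(K)/\Z^{\varphi}_{S}(K)$ (verifying membership in $\Z(K)$, $\Fix(\varphi|)$ and $\Ker(\varphi\,S)$ modulo $\Z^{\varphi}_{S}(K)$ element by element), and only then invokes the third isomorphism theorem to descend $\psi_1$ to the larger quotients. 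This gap propagates into your assertion (4): the maps $\omega_H,\omega^{\phi}_H$ for $\I(H,G,\phi,R)$ are defined on $(H/\Z^{\phi|}_{R|}(H))\times(H/\Z^{\phi|}_{R|}(H))$, not on $(H/\Z^{\phi}_{R}(H))\times(H/\Z^{\phi}_{R}(H))$, so they do not literally ``coincide'' with the ambient ones; one needs the corrected version of (3) to know the diagram \eqref{cd} descends to these coarser quotients.
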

\begin{proof}
Since $(H,G,\phi,R)$ and $(K,L,\varphi,S)$ are isoclinic, there exist isomorphisms
 $$(\psi_1, \eta_1): (H, G, \phi, R) / \Z(H, G,  \phi, R ) \rightarrow (K, L, \varphi, S) / \Z(K, L, \varphi, S)$$ 
 and
$$(\psi_2, \eta_2): (H, G, \phi, R)^\prime \rightarrow (K, L, \varphi, S)^\prime$$
such that the diagram \eqref{cd} commutes.
\begin{enumerate} 
\item For $h\in H$, let $[R(h)]$ denote the image of $R(h)$ in $\IM(R)/ (\IM(R) \cap \Ker(\phi))$, and $\overline{R(h)}$ denote the image of $R(h)$ in $G/ \Ker(\phi)$. Then, the map $[R(h)] \mapsto \overline{R(h)}$ is an embedding of $\IM(R)/(\IM(R) \cap \Ker(\phi))$ into $G/ \Ker(\phi)$. Similarly, we have an embedding of $\IM(S)/(\IM(S) \cap \Ker(\varphi))$ into $K/ \Ker(\varphi)$. Since $(\psi_1, \eta_1)$ is a homomorphism of relative Rota-Baxter groups, we have  $\eta_1 \; \overline{R} = \overline{S} \; \psi_1$. This identity implies that $\eta_1$ maps $\IM(R)/ (\IM(R) \cap \Ker(\phi))$ onto $\IM(S)/ (\IM(S) \cap \Ker(\varphi))$.

\item Commutativity of the diagram \eqref{cd} gives
\begin{align}\label{com1}
\psi_2(\phi_{R(h_1)}(h_2) h_2^{-1})=\varphi_{S(k_1)}(k_2) k_2^{-1}
\end{align}
for all $h_1, h_2 \in H$, where $\psi_1(\overline{h}_1)=\overline{k}_1$ and $\psi_1(\overline{h}_2)=\overline{k}_2$ for some $k_1, k_2 \in K$. This shows that $\psi_2$ maps $H^{R, \phi}$ into $K^{S, \varphi}$. We claim that $\psi_2|_{H^{R, \phi}}: H^{R, \phi} \to K^{S, \varphi}$ is surjective. Let $\varphi_{S(k_1)}(k_2) k_2^{-1} \in K^{S, \varphi}$, where $k_1, k_2 \not\in \Z^{\varphi}_{S}(K)$. By leveraging the fact that $\psi_1$ is an isomorphism, we can identify $h_1, h_2 \in H$ such that $\psi_1(h_1)=k_1$ and $\psi_2(h_2)=k_2$. Substituting these values into \eqref{com1}, we obtain $\psi_2(\phi_{R(h_1)}(h_2) h_2^{-1})=\varphi_{S(k_1)}(k_2) k_2^{-1}$, which establishes our claim.

\item Let $ \Z^{\phi|}_{R|}(H)$ and $ \Z^{\varphi|}_{S|}(K)$ denote the first tuple of center of  $\I(H,G,\phi,R)$ and $\I(K,L,\varphi,S)$, respectively. Observe that $\Z^{\phi}_{R}(H) \subseteq \Z^{\phi|}_{R|}(H)$ and $\Z^{\varphi}_{S}(K) \subseteq \Z^{\varphi|}_{S|}(K)$. Further, we have an isomorphism $\psi_1:H/ \Z^{\phi}_R(H) \to K/ \Z^{\varphi}_S(K)$ of groups. We claim that $\Z^{\phi|}_{R|}(H)/ \Z^{\phi}_{R}(H)$ and $\Z^{\varphi|}_{S|}(K)/ \Z^{\varphi}_{S}(K)$ are isomorphic under the restriction of $\psi_1$. To see this, let $x \in \Z^{\phi|}_{R|}(H)$ and let $\psi_1(\overline{x})= \overline{k}$ for some $k \in K$. Since $(\psi_1, \eta_1)$ is a homomorphism of relative Rota-Baxter groups, for all $g \in G$, we have 
\begin{equation}\label{morphism eq}
\overline{S}\psi_1=\eta_1 \overline{R} \quad \textrm{and} \quad \psi_1 \overline{\phi}_{\overline{g}}= \overline{\varphi}_{\eta_1(\overline{g})} \psi_1.
\end{equation}
Since $x \in \Fix(\phi|)$, we have $\phi_{R(h)}(x)=x$ for all $h \in H$. Using \eqref{morphism eq}, we obtain
$$\overline{k}=\psi_1(\overline{x})=\psi_1\overline{\phi}_{\overline{R}(\overline{h})}(\overline{x})= \overline{\varphi}_{\eta_1(\overline{R}(\overline{h}))} \psi_1(\overline{x})= \overline{\varphi}_{\overline{S}(\psi_1(\overline{h}))} (\overline{k}),$$
and hence $k \in \Fix (\varphi|)$ modulo  $\Z^{\varphi}_{S}(K)$. Given any $k_1 \in K$, let $\psi_1(\overline{x}_1)=\overline{k}_1$. Since $x \in \Z(H)$, we have $$\overline{k} ~\overline{k}_1=\psi_1(\overline{x}) \psi_1(\overline{x}_1)=\psi_1(\overline{xx_1})=\psi_1(\overline{x_1x})= \psi_1(\overline{x}_1) \psi_1(\overline{x})=\overline{k}_1\overline{k},$$
and hence $k \in \Z(K)$ modulo  $\Z^{\varphi}_{S}(K)$. Finally, since $x \in \Ker(\phi ~R)$, using \eqref{morphism eq}, we obtain $k \in \Ker(\varphi ~S)$ modulo $\Z^{\varphi}_{S}(K)$. Thus, we can deduce that $\psi_1(\overline{x})$ belongs to $\Z^{\varphi|}_{S|}(K)/ \Z^{\varphi}_{S}(K).$
 For surjectivity of the restriction of $\psi_1$,  let $y \in \Z^{\varphi|}_{S|}(K)$ and $h \in H$ be such that $\psi_1(\overline{h}) = \overline{y}$. Using a similar argument as before, we can show that $h \in \Z^{\phi|}_{R|}(H)$. This proves our claim. Now, it follows from the third isomorphism theorem that $\psi_1$ induces an isomorphism $H/ \Z^{\phi|}_{R|}(H) \cong K/ \Z^{\varphi|}_{S|}(K)$, which is desired.

\item By definition, we have 
$$\I(H,G,\phi,R)/ \Z(\I(H,G,\phi,R))= (H/\Z^{\phi|}_{R|}(H), \IM(R)/(\IM(R) \cap \Ker(\phi)), \overline{\phi}, \overline{R} ),$$ 
$$(\I(H,G,\phi,R))^\prime=(H^{\phi|}, \IM(R), \phi|, R|),$$
$$\I(K,L,\varphi,S)/ \Z(\I(K,L,\varphi,S))= (K/\Z^{\varphi|}_{S|}(K), \IM(S)/(\IM(S) \cap \Ker(\varphi)), \overline{\varphi}, \overline{S})$$
 and 
 $$(\I(K,L,\varphi,S))^\prime=(K^{\varphi|}, \IM(S), \varphi|, S|).$$ It follows from assertions (1) and (3) that
$$\I(H,G,\phi,R)/ \Z(\I(H,G,\phi,R)) \cong \I(K,L,\varphi,S)/ \Z(\I(K,L,\varphi,S))$$
via an isomorphism induced by the pair $(\psi_1,\eta_1)$. Similarly, it follows from  assertion (2) and the fact $(H,G,\phi,R)^\prime \cong (K,L,\varphi,S))^\prime$ that $(\I(H,G,\phi,R))^\prime \cong (\I(K,L,\varphi,S))^\prime$ via an isomorphism induced by the pair $(\psi_2,\eta_2)$. Now, it follows from the commutativity of the diagram \eqref{cd} that the relative Rota-Baxter groups $\I(H,G,\phi,R)$ and  $\I(K,L,\psi,S)$ are isoclinic.
\end{enumerate}
\end{proof}

Isoclinism of skew left braces has been introduced recently in \cite{TV23}. Let $(H, \cdot, \circ)$ be a skew left brace. Then we have maps  $\theta_H, \theta^{*}_H: (H/ \Ann(H)) \times (H/ \Ann(H)) \to H^\prime$ given by $$\theta_H(\overline{a}, \overline{b})= a \cdot b \cdot a^{-1} \cdot b^{-1}$$ and $$\theta^{*}_H(\overline{a}, \overline{b})= a^{-1} \cdot (a \circ b) \cdot a^{-1}.$$ Two skew left braces $(H, \cdot, \circ)$ and $(K, \cdot, \circ)$ are said to be isoclinic if there are isomorphisms $\xi_1 : H/ \Ann(H)\to K/ \Ann(K)$ and $\xi_2: H^\prime \to K^\prime$ such that the following diagram commutes
\begin{align}\label{cdsb}
\begin{CD}
		H^\prime  @<{\theta_H}<<(H/ \Ann(H)) \times (H/ \Ann(H))@>{\theta^{*}_H}>> H^\prime \\
		@V{\xi_2}VV @V{\xi_1 \times \xi_1}VV @V{\xi_2}VV\\
		K^\prime @<{\theta_K}<< (K/ \Ann(K)) \times (K/ \Ann(K))@>{\theta^{*}_K}>> K^\prime.
	\end{CD}	
\end{align}

We conclude with the following result.

\begin{thm}\label{isoclinism rrbg implies isoclinism slb}
Let $(H,G,\phi,R)$ and $(K,L,\varphi,S)$ be isoclinic relative Rota-Baxter groups. Then their induced skew left braces are also isoclinic.
\end{thm}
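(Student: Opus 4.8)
The plan is to reduce the statement to a near-tautological comparison of two commutative diagrams, after first replacing each relative Rota-Baxter group by its image. First I would invoke Proposition \ref{indiso}(4) to pass from the given isoclinism of $(H,G,\phi,R)$ and $(K,L,\varphi,S)$ to an isoclinism of $\I(H,G,\phi,R)$ and $\I(K,L,\varphi,S)$; by Proposition \ref{image same skew brace} the induced skew left braces are unchanged under $\I(-)$, so it suffices to prove the theorem under the assumption $G=\im(R)$ and $L=\im(S)$. This reduction is the conceptual heart of the argument: for the original groups one has in general only the strict containments $\Z^{\phi}_{R}(H)\subseteq \Ann(H_R)$ and $H_R'\subseteq H^{\phi}$, so the isoclinism diagram \eqref{cd} of $(H,G,\phi,R)$ does \emph{not} match the brace diagram \eqref{cdsb}; it is exactly upon restricting $G$ to $\im(R)$ that the two sides become identifiable.

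With $G=\im(R)$ in force, I would record two identifications. By Proposition \ref{lem:mod-iso}(2), now with $\phi|=\phi$ and $R|=R$, we have $\Z^{\phi}_{R}(H)=\Ann(H_R)$ as subsets, whence $H/\Z^{\phi}_{R}(H)=H/\Ann(H_R)$, and likewise for $K$. For the commutators, since every $g\in G=\im(R)$ is of the form $R(a)$, the generating set $\{\phi_g(h)h^{-1}\}$ of $H^{(2)}$ coincides with $\{\phi_{R(a)}(h)h^{-1}\}=H^{R,\phi}$, which is precisely the generating set of the brace subgroup $H^{(2)}$ occurring in $H_R'$; hence $H^{\phi}=\langle [H,H],\,H^{R,\phi}\rangle=H_R'$, and similarly $K^{\varphi}=K_S'$.

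Next I would verify that the structure maps of the two diagrams agree. The map $\omega_H(\overline{h}_1,\overline{h}_2)=[h_1,h_2]$ is literally the brace commutator $\theta_H(\overline{h}_1,\overline{h}_2)=h_1\cdot h_2\cdot h_1^{-1}\cdot h_2^{-1}$ computed in $H^{(\cdot)}$. For the starred maps, a direct computation in $H_R$ using $a\circ_R b=a\cdot\phi_{R(a)}(b)$ gives $\theta^{*}_H(\overline{a},\overline{b})=a^{-1}\cdot(a\circ_R b)\cdot b^{-1}=\phi_{R(a)}(b)\,b^{-1}=\omega^{\phi}_H(\overline{a},\overline{b})$, and similarly $\theta^{*}_K=\omega^{\varphi}_K$. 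Thus, under the identifications of the previous paragraph, the diagram \eqref{cd} furnished by the isoclinism of $\I(H,G,\phi,R)$ and $\I(K,L,\varphi,S)$ is \emph{identical} to the diagram \eqref{cdsb} for the braces $H_R$ and $K_S$, with $\xi_1:=\psi_1$ and $\xi_2:=\psi_2$. Since \eqref{cd} commutes, so does \eqref{cdsb}, and therefore $H_R$ and $K_S$ are isoclinic skew left braces.

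As to obstacles: essentially no genuine computation remains once the passage to $\I(-)$ is made, since all four structure maps then match on the nose; the only point demanding care is the bookkeeping that the isomorphisms $\psi_1$ and $\psi_2$ produced for the image groups descend, respectively restrict, to group isomorphisms $H/\Ann(H_R)\to K/\Ann(K_S)$ and $H_R'\to K_S'$, which is guaranteed by assertions (2) and (3) of Proposition \ref{indiso}.
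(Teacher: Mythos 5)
Your proposal is correct and follows essentially the same route as the paper: reduce to the image via Proposition \ref{indiso}(4) (using that $\I(-)$ does not change the induced brace), identify the center quotient with $H_R/\Ann(H_R)$ via Proposition \ref{lem:mod-iso} and the commutator $H^{\phi}$ with $H_R'$ (the paper cites Proposition \ref{prop:induced-slb}(2) where you argue directly with generating sets), and then observe that the two commutative diagrams coincide under these identifications. Your explicit verification that $\omega^{\phi}_H$ equals $\theta^{*}_H$ is exactly what the paper packages into the statement that morphisms of relative Rota-Baxter groups induce morphisms of the corresponding skew left braces, so there is no substantive difference in approach.
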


\begin{proof}
By Proposition \ref{indiso}, we have that $\I(H,G,\phi,R)$ and $\I(K,L,\varphi,S)$ are also isoclinic. Thus, there exist isomorphisms
$$(\psi_1, \eta_1): \I(H, G, \phi, R) / \Z(\I(H, G,  \phi, R) ) \rightarrow \I(K, L, \varphi, S) / \Z(\I(K, L, \varphi, S))$$ 
and
$$(\psi_2, \eta_2): (\I(H, G, \phi, R))^\prime \rightarrow (\I(K, L, \varphi, S))^\prime$$
such that the following diagram commutes
\begin{align}
\begin{CD}\label{cd3}
	H^{\phi|}  @<{\omega_H}<<\big( H/ \Z^{\phi|}_{R|}(H)\big) \times \big( H/ \Z^{\phi|}_{R|} (H)\big)@>{\omega^{\phi|}_H}>> H^{\phi|} \\
	@V{\psi_2}VV @V{\psi_1 \times \psi_1}VV @V{\psi_2}VV\\
	K^{\varphi|} @<{\omega_K}<< \big(K/ \Z^{\varphi|}_{S|} (K)\big) \times \big(K/ \Z^{\varphi|}_{S|}(K) \big)@>{\omega^{\varphi|}_K}>> K^{\varphi|}.
\end{CD}
\end{align}
Since every morphism of relative Rota-Baxter groups induces a morphism of corresponding skew left braces, diagram \ref{cd3} gives the following commutative diagram of induced skew left braces

\begin{align}
\begin{CD}\label{cd4}
	(H^{\phi|})_{R|}  @<{\omega_H}<< (H/ \Z^{\phi|}_{R|}(H))_{\overline{R}}\times  (H/ \Z^{\phi|}_{R|}(H))_{\overline{R}}@>{\omega^{\phi|}_H}>> (H^{\phi|})_{R|} \\
	@V{\psi_2}VV @V{\psi_1 \times \psi_1}VV @V{\psi_2}VV\\
	(K^{\varphi|})_{S|} @<{\omega_K}<< (K/ \Z^{\varphi|}_{S|}(K))_{\overline{S}} \times  (K/ \Z^{\varphi|}_{S|}(K))_{\overline{S}}  @>{\omega^{\varphi|}_K}>> (K^{\varphi|})_{S|}.
\end{CD}
\end{align}

Proposition \ref{lem:mod-iso} shows that the skew left brace induced by  $\I(H, G, \phi, R) / \Z(\I(H, G,  \phi, R) )$  on $H/ \Z^{\phi|}_{R|}(H)$ is identical to $H_{R|} / \Ann(H_{R|})$.  Further, Proposition \ref{prop:induced-slb} asserts that the skew left brace induced by $(\I(H,G,\phi,R))^{\prime}$ on $H^{\phi|}$ is identical to the skew left brace $H_{R|}^{\prime}$. Since $H_R=H_{R|}$ and  $K_S=K_{S|}$, diagram \ref{cd4} gives the following commutative diagram of skew left braces
$$
\begin{CD}
	H_R^\prime  @<{\omega_H}<< (H_R/\Ann(H_R))\times (H_R/\Ann(H_R))@>{\omega^{\phi|}_H}>> H_R^\prime\\
	@V{\psi_2}VV @V{\psi_1 \times \psi_1}VV @V{\psi_2}VV\\
	K_S^\prime @<{\omega_K}<< (K_S/\Ann(K_S)) \times  (K_S/\Ann(K_S))  @>{\omega^{\varphi|}_K}>> K_S^\prime.
\end{CD}
$$
which shows that $H_R$ and $K_S$ are isoclinic.
\end{proof}

\begin{remark}
The converse of the preceding theorem is not necessarily true. For instance, take two non-isomorphic trivial braces (abelian groups) $(H, \cdot_H, \cdot_H)$ and $(K, \cdot_K, \cdot_K)$. Then they are  isoclinic as braces since $H/ \Ann(H)$, $K/ \Ann(K)$, $H^\prime $ and $K^\prime$ are all trivial. But, the relative Rota-Baxter groups $(H^{(\cdot_H)}, H^{(\cdot_H)}, \lambda^H, \Id_H)^\prime$ and $(K^{(\cdot_K)}, K^{(\cdot_K)}, \lambda^K, \Id_K)^\prime$ cannot be isomorphic since $H^{(\cdot_H)}$ and $K^{(\cdot_K)}$ are not isomorphic.
\end{remark}

\begin{ack}
The authors thank Tushar Kanta Naik for his interest in this work. Nishant Rathee thanks IISER Mohali for the institute post doctoral fellowship. Mahender Singh is supported by the SwarnaJayanti Fellowship grants DST/SJF/MSA-02/2018-19 and SB/SJF/2019-20.
\end{ack}

\section{Declaration}
The authors declare that they have no conflict of interest.

\section{Data Availability}
The authors can make the GAP code used in this study available upon request. 
\medskip

\end{document}